\newtheorem{remark}{Remark}
\def\x{{\bf x}}
\def\y{{\bf y}}
\def\s{{\bf s}}
\def\t{{\bf t}}
\def\c{{\bf c}}
\def\j{{\bf j}}
\def\k{{\bf k}}
\def\beq{\begin{equation}}
\def\eeq{\end{equation}}
\def\newpage{\vfill\eject}
\def\bR{{\bf R}}
\def\rprime{\hbox{\hskip.25em\raise.5ex\hbox{$'$}\hskip.15em}}
\def\ddn1{{\frac{\partial}{\partial \nu_{\yb}}}}
\newcommand{\bbR}{{\mathbb R}}
\title{An adaptive fast Gauss transform in two dimensions}
\author{Jun Wang%
\thanks{Courant Institute of Mathematical Sciences,
New York University, New York, New York 10012.
(Present address: 
Flatiron Institute, Simons Foundation, New York, New York 10010.
Email: junwang@flatironinstitute.org.)}
\and
Leslie Greengard%
\thanks{Courant Institute of Mathematical Sciences,
New York University, New York, New York 10012 and
Flatiron Institute, Simons Foundation, New York, New York 10010.
This work was supported in part by the Applied Mathematical
Sciences Program of the U.S. Department of Energy under Contract
DEFGO288ER25053 and by the RiskEcon Lab for Decision Metrics,
Courant Institute.
Email: greengard@cims.nyu.edu.}
}
\begin{document}

\maketitle
\pagestyle{myheadings}
\markboth{\sc J. Wang and L. Greengard}
{\sc An adaptive fast Gauss transform in two dimensions}

\begin{abstract}
A variety of problems in computational physics and engineering
require the convolution of the heat kernel (a Gaussian) with either 
discrete sources, densities supported on boundaries, or continuous volume
distributions. We present a unified fast Gauss transform for this purpose 
in two dimensions, making use of  
an adaptive quad-tree discretization on a unit
square which is assumed to contain all sources. Our implementation permits
either free-space or periodic boundary conditions to be imposed, 
and is efficient for any choice of variance in the Gaussian.
\end{abstract}

\begin{keywords}
fast Gauss transform, heat equation, adaptive mesh refinement
\end{keywords}

\vspace{.1in}

\begin{AMS}
31A10 35K10 65R10 65Y20
\end{AMS}

\section{Introduction}

A variety of problems in applied physics and engineering involve the 
solution of the heat equation
\begin{equation} \label{heateqfree}
\begin{split}
u_{t}(\x,t) &= \Delta u(\x,t) + F(\x,t) \\
u(\x,0) &= f(\x)
\end{split}
\end{equation}
for $t>0$,
in a interior or exterior domain $\Omega$, 
subject to suitable conditions on 
its boundary $\Gamma = \partial\Omega$. For simplicity, we will assume that
these take the form of either Neumann conditions:
\begin{equation}
  \frac{\partial u}{\partial n} (\x,t) = g(\x,t)\ \  {\rm for} \quad \x \in \Gamma,
\label{neumannbc}
\end{equation}
Dirichlet conditions:
\begin{equation}
  u(\x,t) = h(\x,t)\ \ {\rm for} \quad \x \in \Gamma,
\label{dirichletbc}
\end{equation}
or periodic boundary conditions with $\Omega$ the unit square.

In the absence of physical boundaries,  the equations \eqref{heateqfree} 
are well-posed in free space (under mild conditions on the behavior of 
$u$, $f$ and $F$ at infinity) without
auxiliary conditions. Moreover, assuming $F(\x,t)$ and $f(\x)$ are compactly 
supported in the region $\Omega$,
the solution to \eqref{heateqfree} can be expressed at the next time step,
$t = \Delta t$ in closed form as
\begin{equation}
u(\x,\Delta t) = J[f](\x,\Delta t) + V[F](\x,\Delta t)
\label{fullsol}
\end{equation}
with 
\begin{align}
J[f](\x,\Delta t) &= \int_{\Omega} G(\x-\y,\Delta t) f(\y) \, d\y \\
V[F](\x,\Delta t) &= \int_0^{\Delta t} \int_{\Omega} G(\x-\y,\Delta t-\tau) F(\y,\tau) \, d\y d\tau \, .
\label{potdef}
\end{align}
Here,
\[ G(\x,t) = \frac{e^{-\|\x\|^2/4t}}{(4 \pi t)^{d/2}}  \]
is the fundamental solution of the heat equation in $d$ dimensions.
The functions $J[f]$ and $V[F]$ are  referred to as 
{\em initial} (heat) potentials and {\em domain} (heat) potentials,
respectively.  In the remainder of this paper, we assume $d=2$.

For the Neumann problem \eqref{heateqfree}, \eqref{neumannbc}, the classical
representation \cite{guentherlee,pogorzelski} takes the form
\begin{equation}
u(\x,\Delta t) = J[f](\x,\Delta t) + V[F](\x,\Delta t) + S[\sigma](\x,\Delta t)
\label{nsol}
\end{equation}
where
\begin{align}
S[\sigma](\x,\Delta t) &= 
\int_0^{\Delta t} \int_{\Gamma} G(\x-\y,\Delta t-\tau) \sigma(\y,\tau) \,\, ds_{\y} d\tau
\label{spotdef}
\end{align}
is  a {\em single layer} (heat) potential. 
For the Dirichlet problem \eqref{heateqfree}, \eqref{dirichletbc}, the classical
representation takes the form
\begin{equation}
u(\x,\Delta t) = J[f](\x,\Delta t) + V[F](\x,\Delta t) + D[\mu](\x,\Delta t)
\label{dsol}
\end{equation}
where
\begin{align}
D[\mu](\x,\Delta t) &= 
\int_0^{\Delta t} \int_{\Gamma} \frac{\partial G}{\partial n_{\y}} (\x-\y,\Delta t-\tau) \mu(\y,\tau) \,\, ds_{\y} d\tau
\label{dpotdef}
\end{align}
is  a {\em double layer} (heat) potential. Here, 
$\frac{\partial}{\partial n_{\y}}$ denotes the
derivative in the outward normal direction at the boundary point $\y$.
The only unknowns in the representations \eqref{nsol}, \eqref{dsol} are the 
scalar densities $\sigma$ and $\mu$ supported on $\Gamma$. These are obtained 
by solving integral equations to enforce the desired boundary conditions
\cite{guentherlee,pogorzelski}.
Once $\sigma$ or $\mu$ is known, \eqref{nsol}, \eqref{dsol} can be used to 
evaluate the solution at time $t=\Delta t$. This yields a one-step marching 
method for the heat equation that is unconditionally stable 
(see, for example, 
\cite{Arnold89,Brown89,costabel,dargushbanerjee,greengard_lin,powerheat,GLiheatquad,veerapanenibiros1,veerapanenibiros2}).

For our present purposes,
we assume that $\sigma$ and $\mu$ are given.
We assume also that a suitable $M$-stage quadrature has been applied to 
$V[F](\x,\Delta t)$, $S[\sigma](\x,\Delta t)$ and $D[\sigma](\x,\Delta t)$ with 
respect to the time variable, yielding:
\begin{align}
V[F](\x,\Delta t) &\approx \sum_{j=1}^M  w_{V,j} 
\int_{\Omega} G(\x-\y,\Delta t-\tau_j) F(\y,\tau_j) \, d\y \, , \nonumber \\
S[\sigma](\x,\Delta t) &\approx \sum_{j=1}^M  w_{S,j} 
\int_{\Gamma} G(\x-\y,\Delta t-\tau_j) \sigma(\y,\tau_j) \, ds_{\y} \, ,
\label{discretizedpots} \\
D[\mu](\x,\Delta t) &\approx \sum_{j=1}^M  w_{D,j} 
\int_{\Gamma} \frac{\partial G}{\partial n_{\y}}(\x-\y,\Delta t-\tau_j) 
\mu(\y,\tau_j) \, ds_{\y} \, , \nonumber
\end{align}
where $w_{V,j}$, $w_{S,j}$, $w_{D,j}$ are known quadrature weights.

Thus, the computational burden of time-marching 
(that is, evaluating the various heat potentials) is dominated by 
the volume integrals
\begin{equation}
 {\cal V}[f](\x) =
\int_{\Omega} e^{-\frac{|\x-\y|^2}{\delta}} \tilde{f}(\y) \, d\y
     \label{eq:volint}
\end{equation}
and the boundary integrals
\begin{equation}
\begin{split}
 {\cal S}[\sigma](\x) &=
\int_{\Gamma} e^{-\frac{|\x-\y(s)|^2}{\delta}} \tilde{\sigma}(\y(s)) \, ds_{\y} \, , \\
 {\cal D}[\mu](\x) &=
\int_{\Gamma} \frac{\partial}{\partial n_{\y(s)}} 
e^{-\frac{|\x-\y(s)|^2}{\delta}} \tilde{\mu}(\y(s)) \, ds_{\y}  \, ,
\end{split}
\label{eq:bdryint}
\end{equation}
for various values of $\delta$ and 
given functions $\tilde{f}, \tilde{\sigma}, \tilde{\mu}$.
Evaluating these integrals accurately and
efficiently is the focus of the present paper.
%Note that 
%$0 < \delta < \Delta t$ anf that $\delta$
%may be much smaller than the time step depending on the selection of the
%quadrature nodes $\tau_j$ in \eqref{discretizedpots}.

\begin{definition}
The integrals \eqref{eq:volint} and 
\eqref{eq:bdryint} will be referred to as 
volume and boundary Gauss transforms, respectively. 
\end{definition}

\begin{definition}
By the {\em discrete Gauss transform} (DGT), we mean
the evaluation of the Gaussian ``potential" at $M$ points
$\{\x_i\}$ due to $N$ sources located at $\{\y_j\}$ of
strength $\{q_j\}$:
\begin{equation}
 F(\x_i) =
\sum_{j=1}^N q_j\cdot e^{-\frac{|\x_i-\y_j|^2}{\delta}}\;\; 
{\rm for}\ i=1,\cdots,M.
     \label{eq:ptfgt}
\end{equation}
\end{definition}

A variety of algorithms have been developed for the rapid evaluation of 
sums of the form \eqref{eq:ptfgt},
such as the fast Gauss transform (FGT) \cite{greengard_strain1}
(see also \cite{greengard98,pfgt,fgt3,tausch-fgt}). While the naive
DGT requires $O(MN)$ work, the FGT
permits the evaluation of the values $\{ F(\x_i) \}$ 
using only $O(M+N)$ work, independent of $\delta$.
High-dimensional versions of the FGT are of 
interest in statistical and machine learning applications
(see, for example, \cite{elgammal03}), but we
are concerned here with physical modeling, where
the ambient dimension is generally less than or equal to three.

Here, we seek to develop a robust version of the FGT that is
fully adaptive, insensitive to $\delta$, and able
to compute transforms with discrete sources,
volume sources and densities supported on boundaries (Fig. \ref{sourcedist_fig}).
Some notable prior work on continuous (volume) 
fast transforms includes \cite{strain_adapheat}, which describes
a triangulation-based adaptive refinement method 
and \cite{veerapanenibiros2}
which makes use of a high-order, 
adaptive quad-tree based discretization. In both cases,
a ``single-level" FGT was superimposed in order to 
achieve linear scaling.
Our approach also relies on an adaptive quad-tree with 
high-order Chebyshev grids on leaf nodes, but we 
carry out a modified version of the FGT on the 
quad-tree itself.
This requires 
a somewhat more 
complicated implementation, following that of the 
hierarchical fast multipole method (FMM) \cite{MLFMM} or, more precisely,
its level-restricted variants developed to compute 
elliptic volume potentials \cite{AskhamCerfon,Ethridge2001,GLee98,Malhotra2016}.
Our hierarchical FGT permits the inclusion of
boundary Gauss transforms (and discrete sources) at the same time.
We should note that
adaptive FGT variants using an FMM data structure have been constructed 
previously, such as in
\cite{Lee2006}, but for the discrete setting only - where small values of
$\delta$ pose no additional quadrature challenges. We also introduce
new error estimates that are relevant for the hierarchical processing.

\begin{figure}[htbp]
\centering
\includegraphics[width=.8\textwidth]{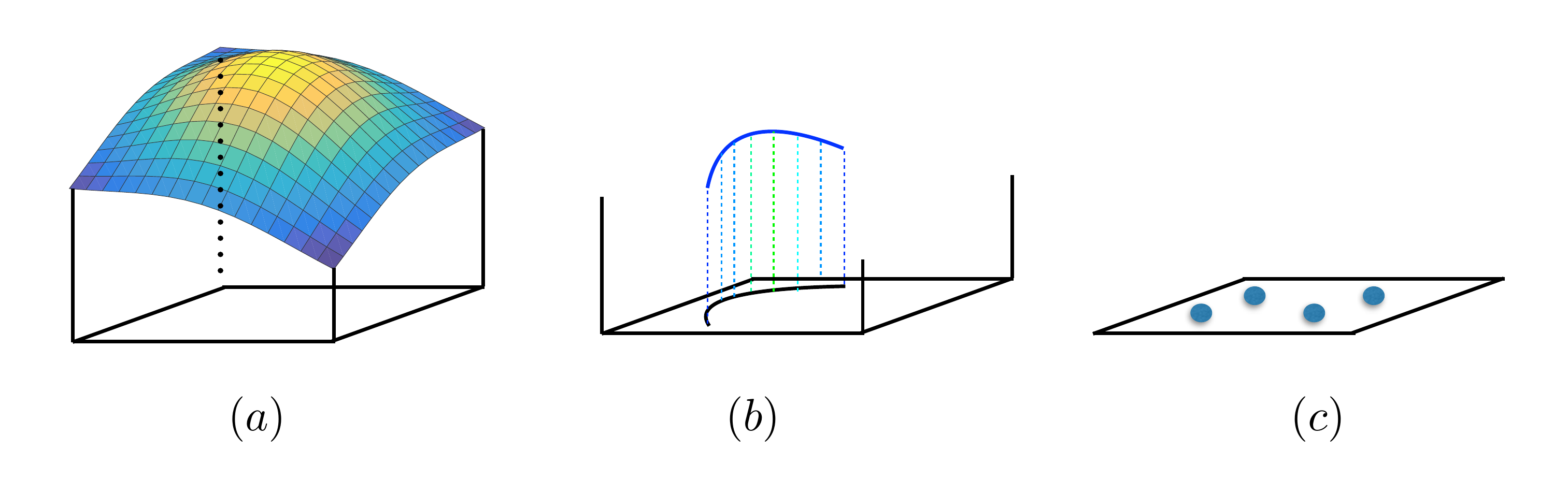}
\caption{We seek a version of the FGT that is able to handle
volume sources (a), densities supported on boundaries (b), and point sources (c).} 
\label{sourcedist_fig}
\end{figure}

The paper is organized as follows. In sections \ref{secdatastruct} and 
\ref{sec_analysis},
we review our adaptive discretization strategy and the analytic machinery on
which the fast Gauss transform is based.
In section \ref{newFGT}, we describe the new 
FGT itself, focusing primarily on the volume integral case
(\ref{eq:volint}), with a brief discussion of the modifications needed for 
(\ref{eq:ptfgt}) and (\ref{eq:bdryint}). We also discuss the incorporation of 
periodic boundary conditions. Section \ref{sec_results} illustrates the 
performance of the algorithm
with several numerical examples and section \ref{sec_conclusions}
contains some concluding remarks.

\section{Data structure} \label{secdatastruct}

\begin{figure}[htbp]
\centering
\includegraphics[width=.8\textwidth]{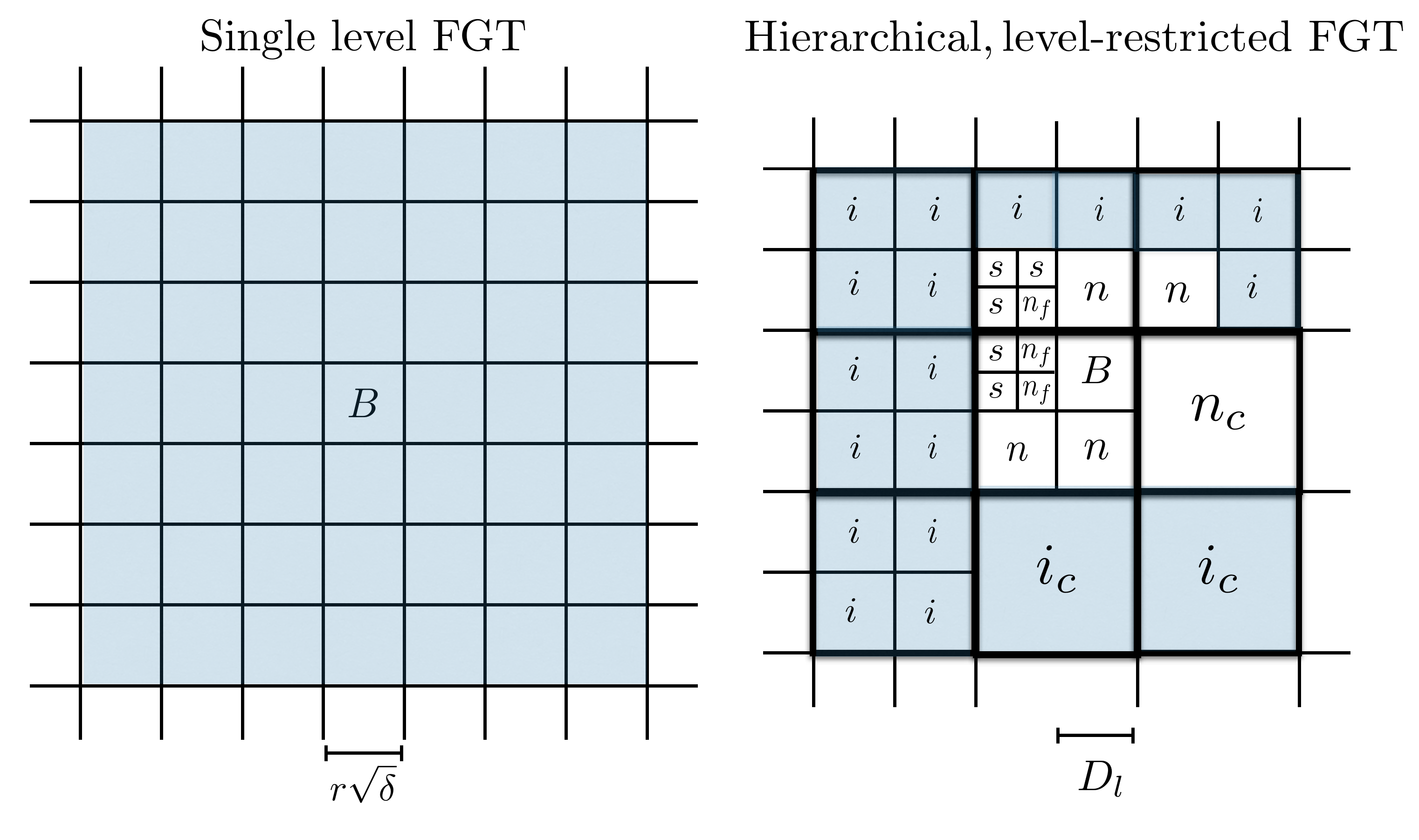}
\caption{The original FGT data structure (left)
and a level-restricted quad-tree data structure (right). 
In the original FGT, the interaction region (shaded blue-gray) consists
of boxes which are close enough to $B$ that the Gaussian field induced by the 
sources in $B$ is significant. (Outside the shaded region, the field is
exponentially small and can be ignored for any fixed precision.)
In the quad-tree, multiple types of interactions must be accounted for,
described in detail in section \ref{newFGT}.
The interaction
list for a typical node $B$ consists of the boxes labeled [$i$], while 
the near neighbors at the same refinement level are labeled [$n$].
For a leaf node $B$,
there can be near neighbors at one coarser
level [$n_c$] or at one finer level [$n_f$] as well. 
The boxes labeled [$s$] are separated 
from $B$ but at a finer level (see Definition \ref{ldef}), while the boxes
labeled [$i_c$] are separated 
from $B$ but at a coarser level 
(see Definitions \ref{ldef} and \ref{ldef2}).} 
\label{fgtfmm}
\end{figure}

In the classical FGT \cite{greengard_strain1}, aimed at the computation of 
(\ref{eq:ptfgt}), where the sources are discrete, 
a uniform grid is superimposed on the computational domain, with a 
box size of dimension $(r \sqrt{\delta})^d$, where $r \approx 1$
(Fig. \ref{fgtfmm}, left).
Because of the exponential decay of the Gaussian, it is easy
to see that only a finite range of nearby boxes need to be considered 
to achieve any desired precision. That is, the effect due to sources in 
$B$ at targets that are at least $m$ boxes away is of the order 
$O(e^{-m^2r^2})$. Since the field due to sources in any box $B$ is efficiently
represented by a suitable Hermite expansion (see section \ref{sec_analysis}),
it is straightforward to develop an algorithm of complexity
$O(N+M)$, where  $N$ is the number of 
discrete sources and $M$ is the number of targets. 
The FGT is easily modified to allow for adaptivity.
One simply needs to sort 
the source and target points on the uniform grid while
ignoring empty boxes and keeping track of the relevant neighbors for each box.
The total storage is then of the order $O(N+M)$ as well.
This can be accomplished, for example, with an adaptive quad-tree that is refined
uniformly to a level where the box size is approximately $(r \sqrt{\delta})^d$, 
pruning empty boxes on the way.

Such a strategy fails for volume integrals of the form
(\ref{eq:volint}), since there are no empty boxes.
Instead, we will assume that the right-hand side (the function $f$ in 
\eqref{eq:volint}) is specified on a level-restricted quad-tree.  These data
structures have 
been shown to be extremely effective for elliptic volume integrals
\cite{AskhamCerfon,Huang2006,Ethridge2001,Langston2011,GLee98,Malhotra2016}.
For the sake of simplicity, we assume that the source distribution 
$f$ in (\ref{eq:volint}) is supported in 
the unit box $D$, centered at the 
origin. Following the discussion of \cite{Ethridge2001}, we assume that
superimposed on $D$ is a hierarchy of refinements (a quad-tree). 
Grid level 0 is defined to be $D$ itself, 
with grid level $l+1$ obtained recursively by subdividing each box at level $l$ 
into four equal parts. 
If $B$ is a fixed box at level $l$, the four boxes at level $l+1$ obtained 
by its subdivision will be referred to as its children. In  a level-restricted,
adaptive tree, we do not assume the same number 
of levels is used in all subdomains of $D$.  We do, however, require that
two leaf nodes which share a boundary point must be no more than one refinement level 
apart (Fig. \ref{fgtfmm}, right).

On each leaf node $B$, we assume that we are given $f$ on a $k\times k$ tensor product 
grid. We may then construct a $k$th-order polynomial 
approximation to $f$ on $B$ of the form
\begin{equation}
 f_B(y_1,y_2) \approx \sum_{j=1}^{N_k}  c_B(j) \, b_j(y_1,y_2),
  \label{eq:leafnode}
\end{equation}
\noindent
where $N_k = \frac{k(k+1)}{2}$ is the number of basis functions needed for $k$th 
order accuracy, and the basis functions $b_j$ are assumed to be scaled to the 
relevant box size and centered on the box center. The coefficient vector
is defined to be $\vec c_B = (c_B(1),\dots,c_B(N_k))$.

For fourth or sixth order accuracy, one can use as
basis functions 
\[ \{ y_1^l y_2^m |\ l,m \geq 0, l+m \leq k-1 \}.  \]
If we let $\vec{f_B} \in {\bf R}^{k^2}$ denote the given function
values (in standard ordering),
then the coefficient vector $\vec{c_B}$ can be computed as the solution of a 
least squares problem (interpolating the desired data $\vec{f_B}$ at the 
corresponding points). The solution operator for this least squares
task is denoted by 
${\cal P} \in {\bf R}^{N_k \times k^2}$, so that
\[ \vec c_B = {\cal P} \, \vec{f_B}. \]
${\cal P}$ can be precomputed and stored (say, using QR factorization).
For eighth (or higher) order accuracy, 
polynomial interpolation is stabilized by assuming that $f$ is given on 
a $k \times k$ tensor product Chebyshev grid and using as basis functions 
\[ \{ T_l(y_1) T_m(y_2) |\ l,m \geq 0, l+m \leq k-1 \},  \]
where $T_l(x)$ denotes the (suitably scaled) Chebyshev polynomial of degree $l$.
The coefficients of the tensor product Chebyshev expansion can be computed 
efficiently using the fast cosine transform \cite{boyd}.
 
In order to develop a fast algorithm for the various 
kinds of source distributions shown in Fig. \ref{sourcedist_fig}, we 
will make use of efficient far field and local representations of the induced field.

\section{Analytical apparatus} \label{sec_analysis}

%Need well-sep version of error estiamte (new).
%Thm for vol sources, bdry source and pt sources
%$\phi(x) = sum + int + int -> h exp with ...

Following the discussion in \cite{greengard_strain1},
we define the Hermite functions $h_n(x)$ by 
\[ 
h_n(x)=(-1)^n D^n e^{-x^2},\;\;\;x\in \bbR,
\]
where $D=d/dx$. They satisfy the relation
\begin{equation}
e^{-(x-y)^2/\delta}=\sum_{n=0}^{\infty} \frac{1}{n!}\left(\frac{y-y_0}{\sqrt{\delta}}\right)^n h_n\left(\frac{x-y_0}{\sqrt{\delta}}\right),
\label{mpole1d}
\end{equation}
where $y_0\in \bbR$ and $\delta>0$.
This formula can be interpreted as an Hermite expansion centered at $y_0$ for the 
Gaussian field $e^{-(x-y)^2/\delta}$ at the target $x$
due to the source at $y$.
Interchanging $x$ and $y$ one can also write:
\begin{equation}
e^{-(x-y)^2/\delta}=\sum_{n=0}^{\infty} \frac{1}{n!}
h_n\left(\frac{y-x_0}{\sqrt{\delta}}\right) 
\left(\frac{x-x_0}{\sqrt{\delta}}\right)^n.
\label{locexp1d}
\end{equation}
This expresses the Gaussian as a Taylor series in the target location $x$ about 
a center $x_0$. 
%For error estimation, we have the following bound: 
%\[ \frac{1}{n!}|h_n(t)|\leq K2^{n/2}\frac{1}{\sqrt{n!}}e^{-t^2/2},
%\]
%which follows from Cramer's inequality \cite{hille}:
%\[
%H_n(t)|\leq K 2^{n/2}\sqrt{n!}e^{t^2/2},
%\]
%where $K < 1.09$.

%%%%%%
It will be convenient to use multi-index notation. In two dimensions, a
multi-index is a pair of non-negative integers
$\alpha=(\alpha_1, \alpha_2)$ with which, for any
$\x = (x_1,x_2) \in{\bf R}^2$, we define:
\[
|\alpha| =\alpha_1+\alpha_2, \quad \alpha! =\alpha_1!\alpha_2!,  \quad
\x^{\alpha} =x_1^{\alpha_1} x_2^{\alpha_2}, \quad
D^{\alpha} =\partial_{x_1}^{\alpha_1}\partial_{x_2}^{\alpha_2},
\]
If $p$ is an integer,
we say $\alpha \geq p$ if $\alpha_1,\alpha_2 \geq p$.
Multi-dimensional Hermite functions are defined by
\[
h_{\alpha}(\x)=h_{\alpha_1}(x_1)h_{\alpha_2}(x_2),
\]
and the analogs of \eqref{mpole1d} and \eqref{locexp1d} are
\begin{equation}
e^{-|\x-\y|^2/\delta}= \frac{1}{\alpha!}
\sum_{\alpha\geq 0} \left( \frac{\y-\y_0}{\sqrt{\delta}} \right)^{\alpha} 
h_{\alpha} \left( \frac{\x-\y_0}{\sqrt{\delta}} \right),
\label{mpole2d}
\end{equation}
\begin{equation}
e^{-|\x-\y|^2/\delta}= \frac{1}{\alpha!}
\sum_{\alpha\geq 0} 
h_{\alpha} \left( \frac{\y-\x_0}{\sqrt{\delta}} \right)
\left( \frac{\x-\x_0}{\sqrt{\delta}} \right)^{\alpha}. 
\label{locexp2d}
\end{equation}

\subsection{Hermite expansions and translation operators}

We turn now to the analytical apparatus needed in the FGT algorithm. 
%For the rest of the section, we will focus on the case of volume integral (\ref{eq:volint}).
The first lemma describes how to transform the field due to a volume 
source distribution
%, a layer density, 
and a collection of discrete Gaussians
into an Hermite expansion about the center of box $B$ in which they are supported.

\begin{lemma}\label{lemma:hexp}
Let B be a box with center $\s_B$ and side length $r\sqrt{\delta}$ and let
the Gaussian field $\phi(\x)$ be defined by 
\begin{equation}  
\phi(\x)=\int_{B}e^{-\frac{|\x-\y|^2}{\delta}} f(\y)\, d\y \ +
%\int_{\Gamma_B} e^{-\frac{|\x-\y(s)|^2}{\delta}} \sigma(\y(s))\, ds_\y \ +
\sum_{j=1}^{N_s} 
q_j e^{-\frac{|\x-\y_j|^2}{\delta}} \, 
  \label{eq:volintb}
\end{equation}
where the $\y_j$ lie in B.
Then,
\begin{equation}
\phi(\x)=
\sum_{\alpha\geq 0}A_{\alpha} h_{\alpha}\left(\frac{\x-\s_B}{\sqrt{\delta}}\right),
  \label{eq:hexp}
\end{equation}
where 
\begin{equation}
A_{\alpha}=
\frac{1}{\alpha!} \left( 
\int_{B} \left(\frac{\y-\s_B}{\sqrt{\delta}}\right)^{\alpha}f(\y)\, d\y + 
%\int_{\Gamma_B} \left(\frac{\y(s)-\s_B}{\sqrt{\delta}}\right)^{\alpha} 
%\sigma(\y(s))\, ds_\y + 
 \sum_{j=1}^{N_s} \left(\frac{\y_j-\s_B}{\sqrt{\delta}}\right)^{\alpha} q_j 
\right) \, .
  \label{eq:hcoeff}
\end{equation}
The error in truncating the Hermite expansion with $p^2$ terms is given by
\begin{equation}
 |E_H(p)|=\left|\sum_{\alpha\geq p}A_{\alpha} 
h_{\alpha}\left(\frac{\x-\s_B}{\sqrt{\delta}}\right)\right| \leq K^2Q_B(2S_r(p)+T_r(p))T_r(p),
  \label{eq:E_H(p)}
\end{equation}
where
\begin{equation}
Q_B=\int_B |f(\y)| d\y + 
%\int_{\Gamma_B} |\sigma(\y(s))| ds_\y + 
\sum_{j=1}^{N_s} |q_j|,
  \label{eq:Q_B}
\end{equation}
\begin{equation}
%S_r(p)= Kp^{-1/4} \frac{r_p^p}{1-r_p} , \;\;\;
%T_r(p)= K^2p^{-1/2} \frac{r_p^{2p}}{(1-r_p)^2} , \;\;\;
%r_p = r \sqrt{\frac{e}{2p}},
S_r(p)=\sum_{n=0}^p \frac{r^n}{\sqrt{n!}},\;\;\;T_r(p)=\sum_{n=p}^{\infty} \frac{r^n}{\sqrt{n!}}.
  \label{eq:srtr}
\end{equation}
and $K < 1.09$.
\end{lemma}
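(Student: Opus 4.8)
The plan is to derive the expansion \eqref{eq:hexp}--\eqref{eq:hcoeff} by substituting the multi-dimensional Hermite expansion \eqref{mpole2d} (with $\y_0 = \s_B$) into the definition \eqref{eq:volintb} and interchanging the sum with the integral and the finite sum over $j$. For the continuous part, $\int_B e^{-|\x-\y|^2/\delta} f(\y)\,d\y = \int_B \sum_{\alpha\geq 0}\frac{1}{\alpha!}\left(\frac{\y-\s_B}{\sqrt\delta}\right)^{\alpha} h_\alpha\!\left(\frac{\x-\s_B}{\sqrt\delta}\right) f(\y)\,d\y$, and moving the integral inside gives exactly the $A_\alpha$ in \eqref{eq:hcoeff}; the discrete part is the same computation with $f(\y)\,d\y$ replaced by $\sum_j q_j \delta(\y-\y_j)$. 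The interchange is justified because, with $\y \in B$, the ratio $|y_i - s_{B,i}|/\sqrt\delta \leq r/2 < 1$ for $r\approx 1$, so the one-dimensional series \eqref{mpole1d} converges absolutely and uniformly on $B$ (it is dominated by a convergent series whose terms behave like $(r/2)^n h_n$, and the Hermite functions grow only subexponentially after the Gaussian normalization). So the first half of the lemma is essentially bookkeeping.

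The substantive part is the truncation bound \eqref{eq:E_H(p)}. First I would reduce the two-dimensional estimate to a one-dimensional one. Writing $\frac{\x-\s_B}{\sqrt\delta} = (t_1,t_2)$ and $\frac{\y-\s_B}{\sqrt\delta} = (u_1,u_2)$, the tail $\sum_{\alpha \geq p}$ should be organized as: all multi-indices with $\alpha_1 \geq p$ \emph{or} $\alpha_2 \geq p$. Using the product structure $h_\alpha = h_{\alpha_1}(t_1) h_{\alpha_2}(t_2)$ and $u^\alpha = u_1^{\alpha_1} u_2^{\alpha_2}$, the double sum factors, and one gets a bound of the schematic form $\big(\sum_{\text{one index} \geq p}\big)\big(\sum_{\text{other index} \geq 0}\big) + \big(\sum \geq p\big)\big(\sum \geq p\big)$ --- this is the source of the algebraic shape $(2S_r(p) + T_r(p))T_r(p)$, with one factor of $T_r(p)$ coming from the "tail" index, $S_r(p)$ from the "full" index, and the cross term $T_r(p)^2$ counted once. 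The key one-dimensional ingredient is a uniform bound $|h_n(x)| \leq K \sqrt{n!}\, 2^{n/2}$ (or some normalization-consistent variant with $K < 1.09$); this is a classical Hermite-function estimate (Szeg\H{o}-type), and combined with $|u_i| \leq r/2$ — or, after absorbing the $2^{n/2}$, an effective ratio of $r$ — each one-dimensional tail $\sum_{n \geq p} \frac{1}{n!}|u_i|^n |h_n(t_i)| \cdot(\text{normalization})$ is controlled by $K \sum_{n\geq p} \frac{r^n}{\sqrt{n!}} = K\,T_r(p)$, and similarly the full sum by $K\,S_r(p)$. Factoring $Q_B$ out of the integral/sum via the triangle inequality $|A_\alpha| \leq \frac{1}{\alpha!}(\ldots)\, Q_B$ (valid because $|u^\alpha| \leq (r/2)^{|\alpha|}$ uniformly) gives the stated $K^2 Q_B$ prefactor, the square reflecting the two tensor directions.

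The main obstacle I anticipate is getting the constant $K < 1.09$ and the exact combinatorial shape $(2S_r(p)+T_r(p))T_r(p)$ to come out cleanly rather than with a looser constant or an extra factor of $2$ or $3$. This requires (i) pinning down the precise normalization of $h_n$ used in \eqref{mpole1d} and the matching sharp constant in the pointwise Hermite bound — presumably this is where $1.09$ enters, from something like $\sup_x |h_n(x)| e^{-x^2/2}/(\sqrt{n!}\,2^{n/2})$ or a closely related quantity being bounded by a number just above $1$; and (ii) carefully enumerating which multi-indices with $\alpha_1 \geq p$ or $\alpha_2 \geq p$ get counted in which product, so that the overlap region $\{\alpha_1 \geq p,\ \alpha_2 \geq p\}$ is not double-counted — writing $\{\alpha_1\geq p \text{ or } \alpha_2 \geq p\}$ as the union and using $|a| \leq \{\alpha_1 \geq p\} + \{\alpha_2 \geq p\}$ termwise gives $2 S_r(p) T_r(p)$ naively, and one has to argue (or accept) the refinement to $(2S_r(p) + T_r(p))T_r(p)$, which actually makes the bound slightly \emph{larger} and hence is a safe over-estimate — so in fact the inequality direction is forgiving, and the real work is just being honest about the Hermite constant. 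Everything else is routine geometric-series and triangle-inequality manipulation.
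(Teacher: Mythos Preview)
Your proposal is correct and follows essentially the same route as the paper. The paper's proof is in fact much terser than yours: it simply names the key pointwise bound as \emph{Cramer's inequality},
\[
\frac{1}{\alpha!}\,|h_{\alpha}(\x)| \;\leq\; K^2\, 2^{|\alpha|/2}\,\frac{1}{\sqrt{\alpha!}}\,e^{-|\x|^2/2},
\]
notes that $|(\y-\s_B)/\sqrt{\delta}|\leq \tfrac{\sqrt{2}}{2}r$ (equivalently $|u_i|\leq r/2$ in each coordinate), and leaves the rest as ``integration and summation.'' Your worry about the combinatorial shape is unnecessary: partitioning the tail $\{\alpha_1\geq p\ \text{or}\ \alpha_2\geq p\}$ into the three disjoint blocks $\{\alpha_1\geq p,\ \alpha_2< p\}$, $\{\alpha_1< p,\ \alpha_2\geq p\}$, $\{\alpha_1\geq p,\ \alpha_2\geq p\}$ gives exactly $2S_r(p)T_r(p)+T_r(p)^2=(2S_r(p)+T_r(p))T_r(p)$, and the factor $2^{n/2}\cdot(r/2)^n=(r/\sqrt{2})^n\leq r^n$ produces the $r^n/\sqrt{n!}$ in $S_r,T_r$ with room to spare.
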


\begin{proof}
The error estimate relies on Cramer's inequality, which takes the form
\begin{equation} \label{eq:cramer}
\frac{1}{\alpha!}|h_{\alpha}(\x)|\leq K^2 2^{|\alpha|/2}
\frac{1}{\sqrt{\alpha!}}e^{-|\x|^2/2}
\end{equation}
in two dimensions, where $K < 1.09$, and the fact that
\begin{equation}
\left|\frac{\y-\s_B}{\sqrt{\delta}}\right|\leq \frac{\sqrt{2}}{2}r,
\end{equation}
for $\y$ in $B$.
The desired result follows from integration over the domain $B$ and summation
over the discrete sources.
\end{proof}

Note that the Hermite expansion converges
extremely rapidly for $r < 1$.  
For larger $r$, they still converge but require larger values of $p$.
(See \cite{baxter,Lee2006,Strain1991,wankarniadakis} for further discussion
of error estimates.)
In the original FGT,
setting $r \approx 1$ is a sensible choice, since a modest value of $p$ 
is sufficient and the number of boxes within the interaction region 
(where the Gaussian field is not vanishingly small) is modest as well.
The interaction region for a box $B$ is the shaded area on the left 
in Fig. \ref{fgtfmm}. 
A second thing to note is that the estimate is {\em uniform}
with respect to the target. In the original FGT, this is necessary since the Hermite 
expansion is evaluated at all relevant locations.
In the hierarchical FGT, however, the Gaussian field due to an Hermite expansion
is evaluated only for boxes that are ``well-separated"
(the boxes labeled by $i$ on the right-hand side of Fig. \ref{fgtfmm}). 
Moreover, we will compute such interactions at every level of the quad-tree, 
so that the boxes could be of arbitrary size. Fortunately, once boxes are separated
by a distance $R \sqrt{\delta}$, their interactions can be ignored with an error
of the order $O(e^{-R^2})$, limiting the size of the expansions
(see Fig. \ref{ntermsfig}).

\begin{figure}[htbp]
\centering
\includegraphics[width=.95\textwidth]{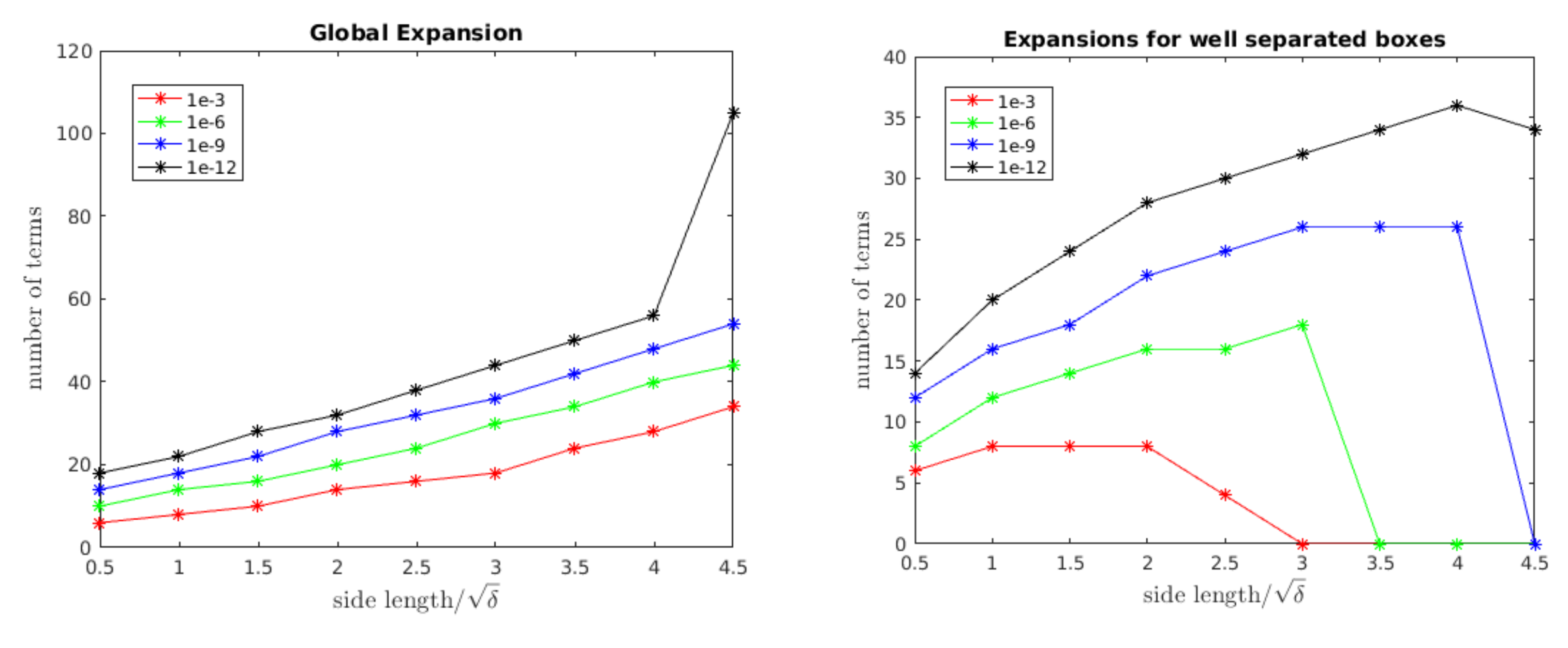}
\caption{(Left) plot of the number of terms $p$ needed in an Hermite
expansion as a function of the box size, when used uniformly in the 
plane, including the near field. (Right)
plot of the number of terms needed when used only in the far field.  } 
\label{ntermsfig}
\end{figure}

In the full algorithm, a more refined estimate that
makes use of the separation criterion will be useful.
We have the following lemma.

\begin{lemma}\label{lemma:hexpp}
Let $B$ be a box with center $\s_B$ and side length $r\sqrt{\delta}$, and let
$C$ be a box with center $\t_C$ and side length $r\sqrt{\delta}$ with 
$x \in C$.
Assuming the distance between B and C is at least $r\sqrt{\delta}$, 
the Gaussian field defined by \eqref{eq:volintb} and its Hermite expansion
\eqref{eq:hexp} satisfy the error bound:
\begin{equation}
|E_H(p)|=\left|\sum_{\alpha\geq p}A_{\alpha} 
h_{\alpha}\left(\frac{\x-\s_B}{\sqrt{\delta}}\right)\right|=
K^2Q_B e^{-\frac{9}{8}r^2}(2S_r(p)+T_r(p))T_r(p),
  \label{eq:E_H(p)_sep}
\end{equation}
where $Q_B$ is given by (\ref{eq:Q_B}), and $S_r(p)$, $T_r(p)$ are given by 
(\ref{eq:srtr}).
\end{lemma}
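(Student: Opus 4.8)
The plan is to revisit the proof of Lemma~\ref{lemma:hexp} and track where the uniform bound $e^{-|\x|^2/2} \le 1$ was used, replacing it with the sharper estimate available under the separation hypothesis. Recall that Cramer's inequality \eqref{eq:cramer} gives
\[
\frac{1}{\alpha!}\left|h_{\alpha}\!\left(\frac{\x-\s_B}{\sqrt{\delta}}\right)\right|
\le K^2\, 2^{|\alpha|/2}\,\frac{1}{\sqrt{\alpha!}}\,
e^{-\frac{1}{2}\left|\frac{\x-\s_B}{\sqrt{\delta}}\right|^2},
\]
and in the proof of Lemma~\ref{lemma:hexp} the exponential factor was simply bounded by $1$. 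Under the new hypotheses, $\x \in C$, the box $C$ has side $r\sqrt{\delta}$, and the distance between $B$ and $C$ is at least $r\sqrt{\delta}$. First I would use these facts to bound $|\x-\s_B|$ from below: the center-to-center distance $|\t_C - \s_B|$ is at least (distance between boxes) plus the two half-diagonals is a gross overestimate in the wrong direction, so instead I bound $|\x - \s_B| \ge |\t_C - \s_B| - |\x - \t_C| \ge |\t_C - \s_B| - \frac{\sqrt2}{2}r\sqrt\delta$, and $|\t_C - \s_B|$ is at least the box separation ($r\sqrt\delta$) plus half the side of each box ($\frac{r}{2}\sqrt\delta$ each), giving $|\t_C - \s_B| \ge 2r\sqrt\delta$. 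Hence $|\x - \s_B| \ge \left(2 - \frac{\sqrt2}{2}\right)r\sqrt\delta$, so that $\left|\frac{\x-\s_B}{\sqrt\delta}\right|^2 \ge \left(2-\frac{\sqrt2}{2}\right)^2 r^2 = \frac{9}{4}r^2$ (approximately; $\left(2-\tfrac{\sqrt2}{2}\right)^2 = 4 - 2\sqrt2 + \tfrac12 = 4.5 - 2\sqrt2 \approx 1.67$, so one should be careful here — see below), and the exponential factor is bounded by $e^{-\frac{9}{8}r^2}$ as claimed.

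With that geometric estimate in hand, the rest of the argument is identical to Lemma~\ref{lemma:hexp}: one writes $|A_\alpha| \le \frac{1}{\alpha!}Q_B\left(\frac{\sqrt2}{2}r\right)^{|\alpha|}$ from \eqref{eq:hcoeff} and the bound $\left|\frac{\y-\s_B}{\sqrt\delta}\right| \le \frac{\sqrt2}{2}r$ for $\y \in B$, then combines with the Cramer bound above. The factor $2^{|\alpha|/2}\left(\frac{\sqrt2}{2}\right)^{|\alpha|} = 2^{|\alpha|/2}2^{-|\alpha|/2} = 1$ cancels, and summing $\sum_{\alpha \ge p}\frac{1}{\sqrt{\alpha!}}\,r^{|\alpha|}$ and splitting it into the product $(2S_r(p) + T_r(p))T_r(p)$ exactly as before produces the stated bound, now carrying the extra factor $e^{-\frac{9}{8}r^2}$. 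The combinatorial identity for the two-dimensional tail sum $\sum_{\alpha \ge p}\frac{r^{|\alpha|}}{\sqrt{\alpha!}} = (2S_r(p)+T_r(p))T_r(p)$ is the same one already invoked in Lemma~\ref{lemma:hexp}, so it can simply be cited.

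The main obstacle — really the only delicate point — is getting the constant $\frac98$ in the exponent exactly right, which comes down to the precise geometric configuration that is hardest to bound. The worst case is when $\x$ sits at the corner of $C$ closest to $B$ and $\s_B$ is as close as the separation allows; one must check that even then $|\x-\s_B|^2/\delta \ge \frac94 r^2$, which (given the $\frac12$ in Cramer's exponent) yields $e^{-\frac98 r^2}$. I would verify this by placing $B$ and $C$ axis-aligned and adjacent-on-a-diagonal in the extremal configuration and computing the minimal corner-to-center distance explicitly; this is the step where an off-by-a-constant error could creep in, and it is worth doing the arithmetic carefully rather than relying on the rough triangle-inequality chain sketched above. (One should also note that the statement is written as an equality rather than an inequality; I would read this as the bound being stated with its governing constant made explicit, and the proof delivers it as an upper bound in the usual way.) Everything else transfers verbatim from the proof of Lemma~\ref{lemma:hexp}.
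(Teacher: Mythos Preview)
Your approach matches the paper's: follow the proof of Lemma~\ref{lemma:hexp} verbatim, but retain the exponential factor $e^{-\frac12\left|\frac{\x-\s_B}{\sqrt\delta}\right|^2}$ from Cramer's inequality and bound it using the separation hypothesis. The combinatorial part (the $(2S_r(p)+T_r(p))T_r(p)$ factorization) is indeed unchanged.

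The only place where you take an unnecessary detour is the geometric estimate. Routing through the center $\t_C$ via the triangle inequality costs you the half-diagonal $\frac{\sqrt2}{2}r\sqrt\delta$ and, as you noticed, fails to produce the constant $\frac94$. The paper instead observes directly that $\left|\frac{\x-\s_B}{\sqrt\delta}\right|\ge \frac32 r$, which immediately gives $e^{-\frac12\cdot\frac94 r^2}=e^{-\frac98 r^2}$. The clean way to see this is to forget $\t_C$ entirely: since $\x\in C$ and $\operatorname{dist}(B,C)\ge r\sqrt\delta$, you have $\operatorname{dist}(\x,B)\ge r\sqrt\delta$; the straight segment from $\s_B$ to $\x$ meets $\partial B$ at some point $\p$, so
\[
|\x-\s_B| = |\x-\p| + |\p-\s_B| \ge \operatorname{dist}(\x,B) + \operatorname{dist}(\s_B,\partial B) \ge r\sqrt\delta + \tfrac{r}{2}\sqrt\delta = \tfrac32 r\sqrt\delta.
\]
This removes the ``off-by-a-constant'' worry you flagged and is all the paper records. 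Your reading of the equality in the statement as an upper bound is correct.
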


\begin{proof}
The proof follows the same outline as that of Lemma \ref{lemma:hexp}. 
In this case, after applying Cramer's inequality (\ref{eq:cramer}), 
we make the additional observation that $|\frac{\x-\s_B}{\sqrt{\delta}}|\geq \frac{3}{2}r$, which contributes to the exponential decay in $r$.
\end{proof}

The next lemma describes the conversion of an Hermite expansion about $\s_B$ 
into a Taylor expansion about $\t_C$.

\begin{lemma}\label{lemma:texp} 
Let
\begin{equation}
\phi(\x)=\sum_{\alpha\geq 0}A_{\alpha} h_{\alpha}\left(\frac{\x-\s_B}{\sqrt{\delta}}\right),
  \label{eq:hexp2}
\end{equation}
denote the Hermite expansion of a 
Gaussian field induced by a source distribution in a box $B$ with center $\s_B$ 
and side length $r\sqrt{\delta}$. Then
$\phi(\x)$
has the following Taylor expansion about the center $\t_C$ of box $C$ 
with side length $r\sqrt{\delta}$:
\begin{equation}
\phi(\x)=
\sum_{\beta\geq 0} B_{\beta}\left(\frac{\x-\t_C}{\sqrt{\delta}}\right)^{\beta}.
\label{eq:texp}
\end{equation}
The coefficients are given by
\begin{equation}
B_{\beta}=\frac{(-1)^{|\beta|}}{\beta!}\sum_{\alpha\geq 0} A_{\alpha}h_{\alpha+\beta}\left(\frac{\s_B-\t_C}{\sqrt{\delta}}\right).
\end{equation}
Assuming that the distance between boxes $B$ and $C$ is at least 
$r\sqrt{\delta}$, the error $E_T(p)$ in truncating the Taylor series after
$p^2$ terms satisfies
\begin{equation}
|E_T(p)|=\left|\sum_{\alpha\geq p}A_{\alpha} 
h_{\alpha}\left(\frac{\x-\s_B}{\sqrt{\delta}}\right)\right| \leq 
K^2Q_B e^{-\frac{9}{8}r^2}(2S_r(p)+T_r(p))T_r(p),
  \label{eq:E_T(p)}
\end{equation}
where $Q_B$ is given by (\ref{eq:Q_B}), and $S_r(p)$, $T_r(p)$ are given by (\ref{eq:srtr}).
\end{lemma}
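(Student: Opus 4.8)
The plan is to get the translation formula by a term-by-term Taylor expansion of the Hermite series in \eqref{eq:hexp2}, and then to reduce the error estimate to essentially the same computation already done for Lemma~\ref{lemma:hexpp}.

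First, for the coefficient formula, I would write $\frac{\x-\s_B}{\sqrt\delta}=\frac{\x-\t_C}{\sqrt\delta}+\frac{\t_C-\s_B}{\sqrt\delta}$ and Taylor-expand each $h_\alpha$ in \eqref{eq:hexp2} in powers of $\frac{\x-\t_C}{\sqrt\delta}$ about $\frac{\t_C-\s_B}{\sqrt\delta}$. The only algebraic ingredient is the differentiation identity $D^\beta h_\alpha=(-1)^{|\beta|}h_{\alpha+\beta}$, immediate from $h_\alpha=(-1)^{|\alpha|}D^\alpha e^{-|\cdot|^2}$, together with the parity relation $h_\gamma(-w)=(-1)^{|\gamma|}h_\gamma(w)$ used to recast the argument as $\frac{\s_B-\t_C}{\sqrt\delta}$. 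Interchanging the (absolutely convergent) double sum over $\alpha$ and $\beta$ then yields \eqref{eq:texp} with the stated $B_\beta$. For the error analysis it is cleaner to observe that the same coefficients arise by expanding the kernel directly about $\t_C$: inserting \eqref{locexp2d} with $\x_0=\t_C$ into \eqref{eq:volintb} gives the equivalent representation $B_\beta=\frac{1}{\beta!}\big(\int_B h_\beta(\frac{\y-\t_C}{\sqrt\delta})f(\y)\,d\y+\sum_j h_\beta(\frac{\y_j-\t_C}{\sqrt\delta})q_j\big)$, and I would check that this agrees with the translated expression.

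For the bound I would then work from this source representation. Cramer's inequality \eqref{eq:cramer} applied to $h_\beta$ gives $\frac{1}{\beta!}|h_\beta(\frac{\y-\t_C}{\sqrt\delta})|\le K^2\,2^{|\beta|/2}\frac{1}{\sqrt{\beta!}}\,e^{-|\y-\t_C|^2/2\delta}$. The separation hypothesis enters exactly as in Lemma~\ref{lemma:hexpp}: since $B$ and $C$ have side $r\sqrt\delta$ and are at least $r\sqrt\delta$ apart, every $\y\in B$ satisfies $|\y-\t_C|\ge\tfrac32 r\sqrt\delta$, so $e^{-|\y-\t_C|^2/2\delta}\le e^{-9r^2/8}$; integrating over $B$ and summing the discrete sources yields $|B_\beta|\le K^2 Q_B e^{-9r^2/8}\,2^{|\beta|/2}/\sqrt{\beta!}$ with $Q_B$ as in \eqref{eq:Q_B}. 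Since $\x\in C$ gives $|\frac{\x-\t_C}{\sqrt\delta}|^\beta\le (r/\sqrt2)^{|\beta|}=r^{|\beta|}/2^{|\beta|/2}$, the factors $2^{|\beta|/2}$ cancel and $|B_\beta(\frac{\x-\t_C}{\sqrt\delta})^\beta|\le K^2 Q_B e^{-9r^2/8}\,r^{|\beta|}/\sqrt{\beta!}$. Summing this over all multi-indices not retained in the $p^2$-term truncation, i.e. over $\{\beta_1\ge p\}\cup\{\beta_2\ge p\}$, and using $r^{|\beta|}/\sqrt{\beta!}=(r^{\beta_1}/\sqrt{\beta_1!})(r^{\beta_2}/\sqrt{\beta_2!})$ to factor the sum into one-dimensional pieces, then splitting the index set into $\{\beta_1\ge p,\ \beta_2<p\}$, $\{\beta_1<p,\ \beta_2\ge p\}$ and $\{\beta_1\ge p,\ \beta_2\ge p\}$ and recognizing each piece as a product of the quantities $S_r(p)$ and $T_r(p)$ from \eqref{eq:srtr}, gives the bound $(2S_r(p)+T_r(p))T_r(p)$ of \eqref{eq:E_T(p)}. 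The same majorant also controls the error incurred when only a $p^2$-term Hermite expansion (rather than the full series) is translated, which is why $|E_T(p)|$ and $|E_H(p)|$ carry the identical right-hand side.

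I expect no genuinely hard step here. The only places requiring care are bookkeeping: getting the signs and the shifted argument right in the translation identity (the Hermite parity relation is easy to misapply), and being precise about the truncated index set so that the factored geometric sums reproduce exactly $(2S_r(p)+T_r(p))T_r(p)$ rather than a slightly larger constant. Everything else is a re-run of the proof of Lemma~\ref{lemma:hexpp} with $A_\alpha h_\alpha(\frac{\x-\s_B}{\sqrt\delta})$ replaced by $\frac{1}{\beta!}h_\beta(\frac{\y-\t_C}{\sqrt\delta})(\frac{\x-\t_C}{\sqrt\delta})^\beta$, using the same $\tfrac32 r$ separation geometry and the same Cramer estimate.
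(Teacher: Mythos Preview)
Your proposal is correct and follows essentially the same line as the paper's own proof, only spelled out in far more detail: the paper simply cites the standard estimates and notes that the key modification is bounding the exponential in Cramer's inequality by $e^{-9r^2/8}$ rather than by $1$, which is exactly the separation argument $|\y-\t_C|\ge \tfrac{3}{2}r\sqrt{\delta}$ you give. Your derivation of the coefficient formula via $D^\beta h_\alpha=(-1)^{|\beta|}h_{\alpha+\beta}$ and the alternative source representation for $B_\beta$ are both fine and more explicit than anything the paper writes down.
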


\begin{proof}
This result follows, again, from 
the standard error estimate in \cite{baxter,Strain1991,wankarniadakis}, 
with one modification; the exponential
term in Cramer's inequality can be bounded by 
$e^{-\frac{9}{8}r^2}$, instead of 1. 
\end{proof}

In practice, we need a variant of Lemma \ref{lemma:texp},
in which the Hermite expansion is truncated before being converted to a Taylor 
expansion. 
\begin{lemma}\label{lemma:H2L}
Let 
\begin{equation}
\phi(\x)=\sum_{\alpha\leq p}A_{\alpha} h_{\alpha}\left(\frac{\x-\s_B}{\sqrt{\delta}}\right)
\end{equation}
denote a truncated Hermite expansion
corresponding to the Gaussian field induced by a source distribution in a box $B$ 
with center $\s_B$ and side length $r\sqrt{\delta}$.
The induced Taylor series in 
a box $C$ with center $\t_C$ and side length $r\sqrt{\delta}$ is given by
\begin{equation}
\phi(\x)=\sum_{\beta\geq 0} 
C_{\beta}\left(\frac{\x-\t_C}{\sqrt{\delta}}\right)^{\beta},
  \label{eq:texptrunc}
\end{equation}
with coefficients
\begin{equation}
C_{\beta}=\frac{(-1)^{|\beta|}}{\beta!}\sum_{\alpha\leq p} A_{\alpha}h_{\alpha+\beta}\left(\frac{\s_B-\t_C}{\sqrt{\delta}}\right).
  \label{eq:tcoeff}
\end{equation}

Assuming that the distance between boxes $B$ and $C$ is at least $r\sqrt{\delta}$, 
the error $E_{HT}(p)$ in truncating the Taylor series after
$p^2$ terms satisfies the bound
\begin{equation}
|E_{HT}(p)|=\left|\sum_{\alpha\geq p}A_{\alpha} 
h_{\alpha}\left(\frac{\x-\s_B}{\sqrt{\delta}}\right)\right| \leq
K^2Q_B e^{-2r^2}(2S_r(p)+T_r(p))T_r(p)S_r^2(p),
  \label{eq:E_{HT}(p)}
\end{equation}
where $Q_B$ is given by (\ref{eq:Q_B}), and $S_r(p)$, $T_r(p)$ are given by (\ref{eq:srtr}).
\end{lemma}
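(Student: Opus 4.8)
The plan is to read $E_{HT}(p)$ as precisely what the statement says --- ``the error in truncating the Taylor series after $p^2$ terms''. Write $\phi$ for the \emph{truncated} Hermite sum $\sum_{\alpha<p}A_{\alpha}h_{\alpha}((\x-\s_B)/\sqrt{\delta})$, a finite and hence entire function, and $C_{\beta}$ for its Taylor coefficients about $\t_C$ as in \eqref{eq:tcoeff}; then $\phi(\x)=\sum_{\beta\ge 0}C_{\beta}((\x-\t_C)/\sqrt{\delta})^{\beta}$ holds exactly, so $E_{HT}(p)=\sum_{\beta\ge p}C_{\beta}((\x-\t_C)/\sqrt{\delta})^{\beta}$, where ``$\beta\ge p$'' denotes, as in the preceding lemmas, the multi-indices lying outside the retained $p\times p$ block. (Measured against the true Gaussian field one would also incur the Hermite truncation error, which is controlled separately by Lemma~\ref{lemma:hexpp}.) With ${\bf z}_0=(\s_B-\t_C)/\sqrt{\delta}$, taking absolute values in \eqref{eq:tcoeff} gives
\[
|E_{HT}(p)|\le \sum_{\beta\ge p}\frac{1}{\beta!}\Big(\sum_{\alpha<p}|A_{\alpha}|\,|h_{\alpha+\beta}({\bf z}_0)|\Big)\,\big|\big(\tfrac{\x-\t_C}{\sqrt{\delta}}\big)^{\beta}\big|,
\]
and the task is to bound the three ingredients.

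From \eqref{eq:hcoeff} and the fact that each coordinate of $(\y-\s_B)/\sqrt{\delta}$ has modulus at most $r/2$ for $\y\in B$, one gets $|A_{\alpha}|\le \tfrac{Q_B}{\alpha!}(r/2)^{|\alpha|}$; likewise, since $\x\in C$, $|((\x-\t_C)/\sqrt{\delta})^{\beta}|\le (r/2)^{|\beta|}$. Cramer's inequality \eqref{eq:cramer}, applied this time to $h_{\alpha+\beta}({\bf z}_0)$, gives $\tfrac{1}{(\alpha+\beta)!}|h_{\alpha+\beta}({\bf z}_0)|\le K^2 2^{|\alpha+\beta|/2}\tfrac{1}{\sqrt{(\alpha+\beta)!}}\,e^{-|{\bf z}_0|^2/2}$; the separation hypothesis enters geometrically, because for two boxes of side $r\sqrt{\delta}$ at distance at least $r\sqrt{\delta}$ the segment from $\s_B$ to $\t_C$ leaves $B$ only after length $\ge\tfrac12 r\sqrt{\delta}$, then crosses a gap of length $\ge r\sqrt{\delta}$, then lies inside $C$ over its final $\ge\tfrac12 r\sqrt{\delta}$, so $|{\bf z}_0|\ge 2r$ and $e^{-|{\bf z}_0|^2/2}\le e^{-2r^2}$. (This is why the exponential improves to $e^{-2r^2}$ compared with the $e^{-9r^2/8}$ of Lemmas~\ref{lemma:hexpp}--\ref{lemma:texp}, where the Hermite function is evaluated at a target in $C$ rather than at the distant center.) Since $(\alpha+\beta)!=\binom{\alpha+\beta}{\alpha}\alpha!\,\beta!\le 2^{|\alpha+\beta|}\alpha!\,\beta!$ in two dimensions, it follows that $\tfrac{1}{\beta!}|h_{\alpha+\beta}({\bf z}_0)|\le K^2 2^{|\alpha+\beta|}\tfrac{\sqrt{\alpha!}}{\sqrt{\beta!}}\,e^{-2r^2}$. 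Multiplying the three estimates, the factor $2^{|\alpha+\beta|}$ cancels the $2^{-|\alpha|-|\beta|}$ in $(r/2)^{|\alpha|}(r/2)^{|\beta|}$, and $\sqrt{\alpha!}/\alpha!=1/\sqrt{\alpha!}$, so each summand is bounded by $K^2Q_B\,e^{-2r^2}\,\tfrac{r^{|\alpha|}}{\sqrt{\alpha!}}\,\tfrac{r^{|\beta|}}{\sqrt{\beta!}}$.

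Finally I would sum over the two index sets. After pulling out $K^2Q_Be^{-2r^2}$, the inner sum over $\alpha_1,\alpha_2<p$ equals $\big(\sum_{n=0}^{p-1}\tfrac{r^n}{\sqrt{n!}}\big)^{2}\le S_r^2(p)$, and the outer sum over the complement of the $p\times p$ block is $\le(2S_r(p)+T_r(p))T_r(p)$ by the same inclusion-exclusion that proves \eqref{eq:E_H(p)} in Lemma~\ref{lemma:hexp}. Assembling these factors gives the claimed bound.

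The step I expect to be the main obstacle is the bookkeeping of the powers of $2$: the $2^{|\alpha+\beta|/2}$ from Cramer's inequality, the further $2^{|\alpha+\beta|/2}$ from the bound on $\sqrt{(\alpha+\beta)!}/\beta!$, and the $2^{-|\alpha+\beta|}$ from the per-axis box bounds must cancel \emph{exactly}, so that the two residual sums close up in terms of $S_r,T_r$ at the argument $r$ (using the coarser Euclidean bound $\tfrac{\sqrt{2}}{2}r$ in place of $r/2$ would leave only the weaker $S_{\sqrt{2}r},T_{\sqrt{2}r}$); and one must check that the relevant separation is the center-to-center distance, which yields $e^{-2r^2}$, rather than the center-to-target distance used in the earlier lemmas. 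All of this is elementary and follows the template of the estimates in \cite{baxter,Strain1991,wankarniadakis}, but the constants need to be tracked carefully.
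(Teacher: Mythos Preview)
Your argument is correct and is essentially the detailed version of what the paper only sketches: the paper's own proof is the single sentence ``The result is a straightforward application of the triangle inequality and Lemma~\ref{lemma:texp},'' with no indication of how the specific constants $e^{-2r^2}$ and $S_r^2(p)$ arise. Your direct estimation---applying Cramer's inequality to $h_{\alpha+\beta}$ at the center-to-center argument $(\s_B-\t_C)/\sqrt{\delta}$, tracking the powers of $2$ so they cancel exactly, and then factoring the double sum---is the natural elaboration in the style of the cited references, and your observation that the relevant separation here is center-to-center ($\ge 2r\sqrt{\delta}$) rather than center-to-target ($\ge \tfrac{3}{2}r\sqrt{\delta}$ in Lemmas~\ref{lemma:hexpp}--\ref{lemma:texp}) is precisely what produces the improved $e^{-2r^2}$.
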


\begin{proof}
The result is a straightforward application of the triangle inequality and Lemma
\ref{lemma:texp}. 
\end{proof}

Note that the total error in using both an Hermite and a local
expansion consists of two contributions: the first 
comes from truncating the Hermite expansion,
given by (\ref{eq:E_H(p)}), while the second comes from truncating 
the local expansion, according to \eqref{eq:E_{HT}(p)}.

For the 
hierarchical FGT, we will also need to propagate Hermite and Taylor expansions 
between levels of the quad-tree. 
The following two lemmas provide the needed analytical tools. 
Lemma \ref{lemma:H2H} describes a formula for shifting the center of
an Hermite expansion, and Lemma \ref{lemma:L2L} describes one for shifting the 
center of a Taylor expansion. The derivation is straightforward \cite{Lee2006}.

\begin{lemma}\label{lemma:H2H}
Let a Gaussian field be given by 
the Hermite expansion
\begin{equation}
\phi(\x)=\sum_{\alpha\geq 0} 
A_{\alpha}h_{\alpha}\left(\frac{\x-\s_B}{\sqrt{\delta}}\right),
\end{equation}
about a center $\s_B$ and let $\s_C$ denoted a shifted expansion center.
Then,
\begin{equation}
\phi(\x)=\sum_{\beta\geq 0} 
B_{\beta}h_{\beta}\left(\frac{\x-\s_C}{\sqrt{\delta}}\right),
\end{equation}
where the coefficients are given by:
\begin{equation}
B_{\beta}=\sum_{\alpha\leq\beta} \frac{\alpha!}{\beta!}
\binom{\beta}{\alpha}
\left(\frac{\s_B-\s_C}{\sqrt{\delta}}\right)^{\beta-\alpha}A_{\alpha}.
  \label{eq:H2H}
\end{equation}
\end{lemma}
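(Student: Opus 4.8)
The plan is to reduce the two-dimensional claim to a single one-dimensional Hermite shift identity, which itself falls straight out of the generating-function relation already recorded in \eqref{mpole1d}. Taking $\delta = 1$ and $y_0 = 0$ there gives $e^{-(x-y)^2} = \sum_{n \ge 0} \frac{y^n}{n!} h_n(x)$. Replacing $y$ by $a+t$, the left side becomes $e^{-((x-a)-t)^2} = \sum_{n \ge 0} \frac{t^n}{n!} h_n(x-a)$, while the right side becomes $\sum_{m \ge 0} \frac{(a+t)^m}{m!} h_m(x) = \sum_{m \ge 0} \frac{h_m(x)}{m!} \sum_{k=0}^{m} \binom{m}{k} a^{m-k} t^k$. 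Both series are entire and absolutely convergent in $t$ for each fixed $x$ and $a$, so I may rearrange the right side and match coefficients of $t^n$ to get
\[
h_n(x-a) = \sum_{m \ge n} \frac{a^{m-n}}{(m-n)!}\, h_m(x).
\]
(Equivalently this comes from differentiating the generating function $n$ times in $x$ and using $h_n = (-1)^n D^n e^{-x^2}$, but the route above needs no computation.)

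Next I would tensorize: multiplying the two coordinate copies of this identity and invoking the multi-index conventions of Section \ref{sec_analysis}, one obtains, for a displacement ${\bf a} \in \bbR^2$,
\[
h_\alpha(\x - {\bf a}) = \sum_{\gamma \ge \alpha} \frac{{\bf a}^{\,\gamma - \alpha}}{(\gamma - \alpha)!}\, h_\gamma(\x).
\]
Then, setting ${\bf a} = (\s_B - \s_C)/\sqrt{\delta}$ so that $(\x - \s_B)/\sqrt{\delta} = (\x - \s_C)/\sqrt{\delta} - {\bf a}$, I substitute this into the hypothesized expansion $\phi(\x) = \sum_{\alpha \ge 0} A_\alpha\, h_\alpha\!\big((\x - \s_C)/\sqrt{\delta} - {\bf a}\big)$ and interchange the order of summation to arrive at
\[
\phi(\x) = \sum_{\beta \ge 0} \Bigg( \sum_{\alpha \le \beta} \frac{1}{(\beta - \alpha)!}\, {\bf a}^{\,\beta - \alpha} A_\alpha \Bigg)\, h_\beta\!\left(\frac{\x - \s_C}{\sqrt{\delta}}\right).
\]
Since $\frac{1}{(\beta - \alpha)!} = \frac{\alpha!}{\beta!}\binom{\beta}{\alpha}$ read componentwise, the bracketed coefficient is exactly the $B_\beta$ of \eqref{eq:H2H}, which completes the proof.

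The derivation is routine — it is the Hermite-expansion analog of the multipole-to-multipole translation in the FMM, and is standard (cf.~\cite{Lee2006}). The only step needing a word of justification is the interchange of summation: this is legitimate because $\phi$ is the Gaussian field of a compactly supported source distribution (Lemma \ref{lemma:hexp}), so its Hermite coefficients $A_\alpha$ decay super-exponentially, every series in sight is dominated by the everywhere-convergent Taylor or Hermite expansion of a finite combination of Gaussians, and the rearrangement is therefore harmless.
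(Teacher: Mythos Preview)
Your proof is correct. The paper does not actually prove this lemma --- it merely states that ``the derivation is straightforward \cite{Lee2006}'' --- and your argument via the generating-function identity \eqref{mpole1d}, matching Taylor coefficients in $t$, and tensorizing is precisely the standard route that remark points to.
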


\begin{lemma}\label{lemma:L2L}
Let $\t_B \in \bbR^2$ and let 
$\{C_{\alpha}\}$ denote the expansion coefficients for a truncated Taylor
series with $p^2$ terms. Letting 
$\t_C \in  \bbR^2$ be a shifted expansion center, we have 
\begin{equation}
\sum_{\alpha\leq p} C_{\alpha} \left(\frac{\x-\t_B}{\sqrt{\delta}}\right)^{\alpha}=\sum_{\beta\leq p} C'_{\beta} \left(\frac{\x-\t_C}{\sqrt{\delta}}\right)^{\beta},
\end{equation}
where 
\begin{equation}
C'_{\beta}=\sum_{\beta\leq\alpha\leq p} C_{\alpha} 
\binom{\alpha}{\beta} \left(\frac{\t_C-\t_B}{\sqrt{\delta}}\right)^{\alpha-\beta} \, .
  \label{eq:L2L}
\end{equation}
\end{lemma}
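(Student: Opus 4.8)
The plan is to prove this as a purely algebraic identity by expanding each monomial $\left(\frac{\x-\t_B}{\sqrt{\delta}}\right)^{\alpha}$ about the shifted center $\t_C$ via the (multi-dimensional) binomial theorem, and then collecting terms. Concretely, write $\frac{\x-\t_B}{\sqrt{\delta}} = \frac{\x-\t_C}{\sqrt{\delta}} + \frac{\t_C-\t_B}{\sqrt{\delta}}$, and apply the componentwise binomial expansion: for a multi-index $\alpha=(\alpha_1,\alpha_2)$ and vectors $\ub,\vb\in\bbR^2$,
\begin{equation}
(\ub+\vb)^{\alpha} = \sum_{\beta\leq\alpha} \binom{\alpha}{\beta} \ub^{\beta}\vb^{\alpha-\beta},
\end{equation}
where $\binom{\alpha}{\beta}=\binom{\alpha_1}{\beta_1}\binom{\alpha_2}{\beta_2}$ and $\beta\leq\alpha$ means $\beta_1\leq\alpha_1$, $\beta_2\leq\alpha_2$. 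This is just the product of two one-variable binomial expansions.

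The key steps, in order, are: (i) substitute the shift into each term of the left-hand sum; (ii) apply the binomial expansion above with $\ub=\frac{\x-\t_C}{\sqrt{\delta}}$ and $\vb=\frac{\t_C-\t_B}{\sqrt{\delta}}$, obtaining a double sum over $\alpha\leq p$ and $\beta\leq\alpha$; (iii) interchange the order of summation so that $\beta$ is the outer index, which is legitimate because the sum is finite — the constraint $\{\alpha\leq p,\ \beta\leq\alpha\}$ is exactly $\{\beta\leq p,\ \beta\leq\alpha\leq p\}$; (iv) read off the coefficient of $\left(\frac{\x-\t_C}{\sqrt{\delta}}\right)^{\beta}$ as
\begin{equation}
C'_{\beta}=\sum_{\beta\leq\alpha\leq p} C_{\alpha}\binom{\alpha}{\beta}\left(\frac{\t_C-\t_B}{\sqrt{\delta}}\right)^{\alpha-\beta},
\end{equation}
which is precisely \eqref{eq:L2L}. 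One should note that the truncation set $\{\alpha\leq p\}$ is preserved exactly under this reindexing, so the shifted series is again a $p^2$-term truncated Taylor series with no truncation error introduced — the identity is exact, not approximate.

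There is essentially no analytic obstacle here; the only thing to be careful about is the bookkeeping of the multi-index binomial coefficients and the swap of summation order, and in particular confirming that the index ranges on both sides match up so that the claimed finite truncation is respected. Since everything in sight is a finite sum of monomials, Fubini-type concerns do not arise, and the result is immediate once the binomial expansion is written componentwise.
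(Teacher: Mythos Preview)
Your proposal is correct and is exactly the ``straightforward'' derivation the paper alludes to; the paper does not spell out a proof of this lemma but simply states that the derivation is straightforward and cites \cite{Lee2006}. Your componentwise binomial expansion and finite reindexing are precisely what is intended.
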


\subsection{Local interactions}

In the previous section, we summarized the analytical machinery 
needed for the fast evaluation of Gaussian fields with well separated sources 
and targets. Before providing a formal description of the full algorithm
in the next section, it remains 
to consider the computation of local interactions between neighboring boxes
at the level of leaf nodes. For point sources, this is done by direct
evaluation. We concentrate in this section on domain integrals, and defer 
a discussion of densities supported on boundaries to section \ref{bdryfgt}.

\begin{figure}[htbp]
\centering
\includegraphics[width=.5\textwidth]{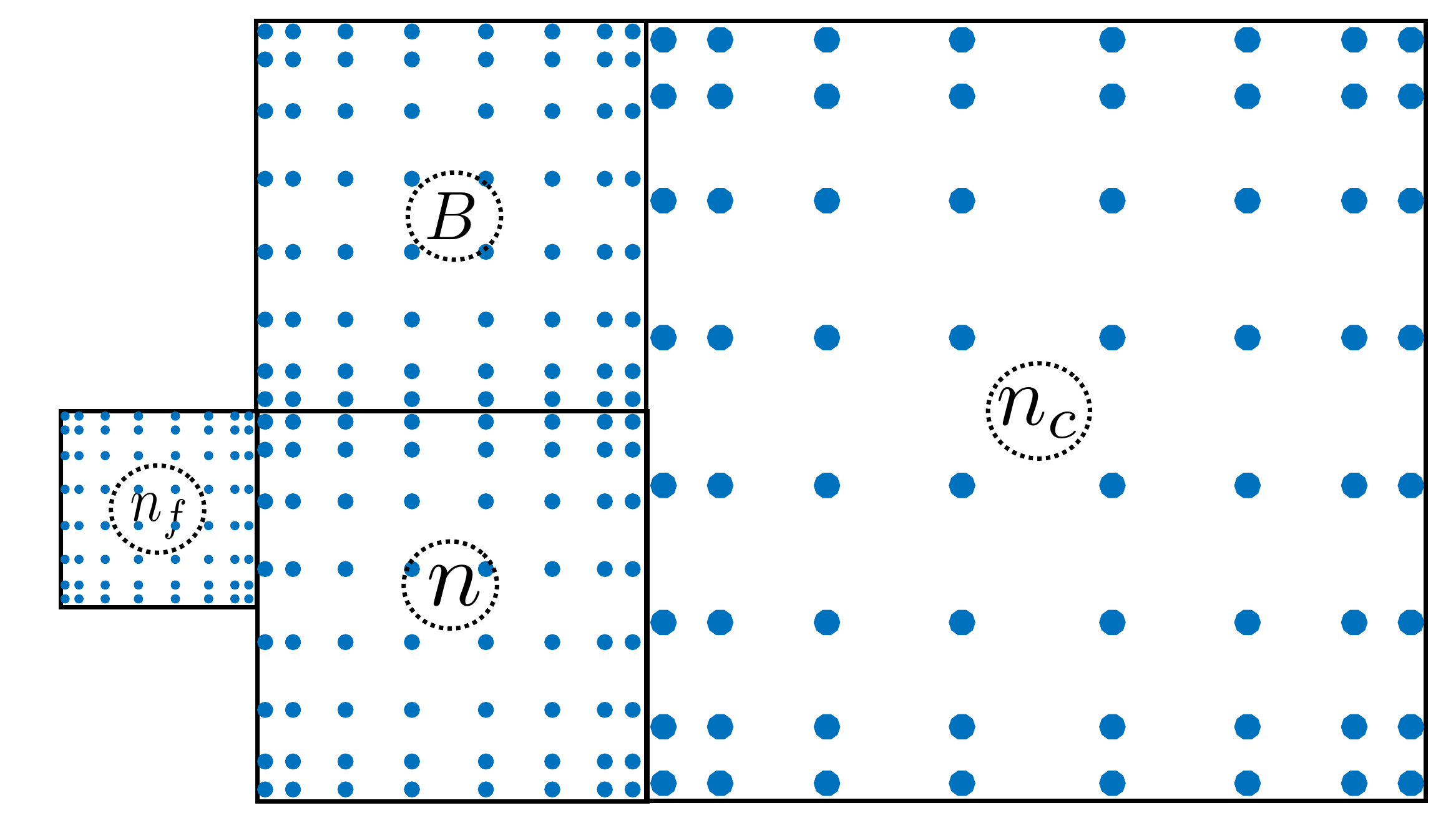}\label{nborgrids}
\caption{For a leaf node $B$, there are three types of possible local 
interactions: the interaction with a colleague 
(a neighbor at the same refinement level, including the self-interaction),
and the interaction with fine and coarse neighbors - one level finer or one 
level coarser, respectively. The grid shown corresponds to an 
8$th$ order accurate tensor product Chebyshev discretization.} 
\label{nborsfig}
\end{figure}

Thus, suppose $B$ is a leaf node - that is a box at level $l$ of the tree 
hierarchy on which a $k \times k$ tensor product grid of function values 
has been specified. Let $r_l$ denote the side length of $B$, so that its
area is $r_l \times r_l$.
Consider now a target point $\t$, which lies in either $B$, a neighboring box of $B$
at the same refinement level, or a coarse or fine neighbor for $B$, 
which can be at most one refinement level apart.
Because of the translation invariance of the kernel,
a simple counting argument shows that there are at most $k \times k \times 9$ 
possible targets at the same level and at most 
$k \times k \times 12$  possible targets in neighbors at either a coarser or 
finer level.
Recalling that the source distribution $f$ on $B$ is given by
(\ref{eq:leafnode}), the Gaussian field induced at $\t$ by
$f_B$ can be approximated by
\begin{equation}
\psi_B(\t)=\sum_{n=1}^{N_k} c_B(n)G(\t,n),
  \label{eq:local}
\end{equation}
with
\begin{equation}
G(\t,n) = \int_B e^{-\frac{(t_1-y_1)^2}{\delta}} 
e^{-\frac{(t_2-y_2)^2}{\delta}} 
b_n(y_1,y_2) \, dy_1 dy_2,
  \label{eq:localweights}
\end{equation}
where $(t_1,t_2)$, $(y_1,y_2)$ denote the coordinates of the target $\t$ 
and source with respect to the center of box $B$. 
Once the values $\{ G(\t,n) \}$ have been tabulated for all possible 
target locations and all basis functions, {\em all} local interactions 
can be computed directly from \eqref{eq:local}, with some care in bookkeeping.

Assuming $b_n(y_1,y_2) = p_{n_1}(y_1) p_{n_2}(y_2)$,
the formula for $G(\t,n)$ can be written in the form
\begin{equation}
G(\t,n) = \int_{-r_l/2}^{r_l/2} e^{-\frac{(t_1-y_1)^2}{\delta}} 
p_{n_1}(y_1) dy_1 \cdot 
\int_{-r_l/2}^{r_l/2} e^{-\frac{(t_2-y_2)^2}{\delta}} 
p_{n_2}(y_2) dy_2 .
\label{eq:localweights_sep}
\end{equation}
Thus, for colleagues (boxes at the same refinement level), there are at most 
$3k$ possible relative target locations $(t_i-y_i)$ and at most $k$ basis 
functions $p_{n_i}(y_i)$, which are either monomials or scaled 
Chebyshev polynomials.
These $3k^2$ numbers can be computed in milliseconds
on a single core.
For coarse or fine colleagues, it is straightfoward to check that 
there are at most 
$4k$ possible relative target locations $(t_i-y_i)$, so that these tables 
involving $4k^2$ numbers can be generated in milliseconds as well.
Finally, we note that such tables must be generated for each refinement 
level that contains a leaf node.

\section{FGT algorithm} \label{newFGT}
%Define interaction lists, colleagues and stuff. 
%Introduce the cutoff level.
%Give a pseudo-code.

We now describe an adaptive version of the Fast Gauss Transform, closely
following the discussion in
\cite{Ethridge2001}. 
Since the Gaussian kernel $e^{-\|\x-\y\|^2/\delta}$
is rapidly decaying, we will ignore interactions beyond a distance
where they can be considered negligible, according to a user-defined
precision $\epsilon$. That is, we define a cut-off parameter $r_c$
so that 
$e^{-|\x-\y|^2/\delta}\leq \epsilon$, when $\| \x-\y\| \geq r_c\sqrt{\delta}$. 
Clearly, if a source box has side length
greater than or equal to $r_c\sqrt{\delta}$, its contribution to 
well-separated boxes is negligible. 
We will also make use of the following definitions:

\begin{definition} \label{def:lcut}
{\rm (Cutoff level):} Given a quad-tree with levels $l=0,1,\cdots,L$, the cutoff level is defined to be the coarsest level of the tree at which the box size
is smaller than or equal to $r_c\sqrt{\delta}$. We denote this by
$l_{cut}$. If the box size is greater 
than $r_c\sqrt{\delta}$ even at the finest level (level $L$), we 
let $l_{cut}=L+1$. 
\end{definition}

\begin{definition} \label{ldef}
{\rm (Neighbors):} 
Leaf nodes at the same level as $B$ 
which share a boundary point, including $B$ itself, are referred to as
{\rm colleagues}. 
Leaf nodes at the level of 
$B$'s parent which share a boundary point with $B$ are referred to as
the {\rm coarse neighbors} of $B$.
Leaf nodes one level finer than $B$
which share a boundary point with $B$ are referred to as
{\rm fine neighbors}.
Together, the union of the 
colleagues, coarse neighbors and fine neighbors of $B$
are referred to as $B$'s {\rm neighbors}.
The {\em s-list} of a box $B$ consists of those children
of $B$'s colleagues which are not fine neighbors of $B$
(Fig. \ref{fgtfmm}).
\end{definition}

\begin{definition} \label{ldef2}
{\rm (Interaction lists):} 
The {\rm interaction region} for $B$ consists of the area covered by
the neighbors of $B$'s parent, excluding the area covered by
$B$'s colleagues and coarse neighbors. The {\rm interaction list}
for $B$ consists of those boxes in the interaction region which
are at the same refinement level (marked $i$ in Fig.\ref{fgtfmm}),
and is denoted by ${\cal I}(B)$.
Boxes at coarser levels will be referred to
as the {\rm coarse interaction list}, denoted
by ${\cal I}_c(B)$
(marked $i_c$ in Fig.\ref{fgtfmm}).
\end{definition}

\begin{definition}  \label{def:exps}
{\rm (Expansions):} We denote by $B_{l,k}$ the {\rm k}th box at refinement 
level $l$ and by $\Phi_{l,k}$ the Hermite
expansion describing the far field due to the source distribution supported
inside $B_{l,k}$.
We denote by $\Psi_{l,k}$ the local
expansion describing the field due to the source distribution outside
the neighbors of $B_{l,k}$ and by
$\tilde \Psi_{l,k}$ the local
expansion describing the field due to the source distribution outside
the neighbors of the {\rm parent} of $B_{l,k}$.
When the context is clear, we will sometimes use the notation 
$\Phi(B)$, $\Psi(B)$, $\tilde \Psi(B)$ to describe the expansions associated
with a box $B$.
\end{definition}

\begin{remark}
Let $B = B_{l,k}$ be a box in the quad-tree hierarchy 
with children $C_1,C_2,C_3,C_4$.
Then, according to Lemma \ref{lemma:H2H}, 
there is a linear operator ${\cal T}_{HH}$ for which
\begin{equation}
 \Phi_{l,k} =  \Phi(B) = {\cal T}_{HH}[ \Phi(C_1),
\Phi(C_2), \Phi(C_3), \Phi(C_4)]. 
\label{lolo}
\end{equation}
The operator
${\cal T}_{HH}$ is responsible for merging the expansions of four children 
into a single expansion for the parent. 
Likewise, according to Lemma \ref{lemma:L2L} there is a linear operator ${\cal T}_{LL}$ for which
\begin{equation}
 [\tilde \Psi(C_1), \tilde \Psi(C_2), \tilde \Psi(C_3), \tilde \Psi(C_4)]
 = {\cal T}_{LL} \, \Psi_{l,k} = 
 {\cal T}_{LL} \, \Psi(B).
\label{tata}
\end{equation}
${\cal T}_{LL}$ is responsible for shifting the incoming data 
(the local expansion) from a parent box to its children.
Finally, according to Lemma \ref{lemma:H2L}, 
for any source box  $B_{l',k'}$ in the interaction list ${\cal I}(B)$
of box $B_{l,k}$, there
is a linear operator ${\cal T}_{HL}$ for which 
the induced field in $B_{l,k}$ is given by 
$\Psi = T_{HL} \Phi_{l',k'}$. Clearly,
\begin{equation}
 \Psi_{l,k} = \tilde \Psi_{l,k} + \sum_{i \in {\cal I}(B)} 
T_{HL} \Phi_{i}. 
\label{lota}
\end{equation}
\end{remark}

Since our adaptive algorithm is operating on a level-restricted adaptive tree,
the leaf nodes need to 
handle far field interactions between boxes at different levels.
More precisely, viewing each such leaf node $B$ as a ``target box",
we need to incorporate the influence of the 
{\em s-list}  and 
the coarse interaction list
(see Fig. \ref{fgtfmm}).
For every box in the {\em s-list}, its Hermite expansion is rapidly 
convergent in $B$ and its 
influence can be computed by direct evaluation of the series. 
We also need to compute the dual interaction - namely the influence of
a leaf node $B$ on a box $B'$ in the {\em s-list}.
Rather than evaluate the Hermite expansion of $B$ at all targets in $B'$,
or shifting to a local expansion in $B'$, we can
directly expand the influence of the polynomial source distribution in $B$,
given by the
coefficients $\vec c_{B}$, as a local expansion in $B'$.
Thus, 
incorporating all far field interactions
into (\ref{lota}), we have: 
\begin{equation}
 \Psi_{l,k} = \tilde \Psi_{l,k} + \sum_{i \in {\cal I}} 
T_{HL} \Phi_{i}, 
+ \sum_{i \in {\cal I}_c} T_{direct} \, \vec c_i.
\label{lotanew}
\end{equation}
The operator $T_{direct}$, which maps the coefficients of a polynomial 
approximation of the density in $B'$ (a coarse interaction list box)
onto the $p^2$ coefficients of the local expansion in $B$ 
can be precomputed and stored for each level in the quad-tree hierarchy.
Inspection of Fig. \ref{fgtfmm}, the translation invariance of the kernel,
and a simple counting argument show that
this requires $O(k p L)$ work and storage, where 
$k$ is the order of polynomial approximation,
$p$ is the order of the local expansion, and 
$L$ is the number of levels.
More precisely, let $b_n(y_1,y_2) = p_{n_1}(y_1) \, p_{n_2}(y_2)$
be a basis function for the polynomial approximation in box $B'$
and let $\alpha = (\alpha_1,\alpha_2)$ denote the multi-index of a term
in the induced local expansion in $B$. Then
\begin{equation}
T_{direct}(\alpha,n) = T_1(\alpha_1,n_1) \, T_2(\alpha_2,n_2),
\label{tdirect}
\end{equation}
where 
\[
T_{i}(\alpha_i,n_i) = \frac{1}{\alpha_i!} \int_{D_{l-1}/2}^{D_{l-1}/2}
h_{\alpha_i} \left( \frac{y_i-s_i}{\sqrt{\delta}} \right)
p_{n_i}(y_i)\, dy_i \, ,
\]
$(s_1,s_2)$ denotes the center of $B$,
and $D_{l-1}$ denotes the side length of box $B'$ at level $l-1$.

\subsection{Pseudocode for the FGT} \label{algorithm}

We assume we are given a square domain $B_{0,0}$, 
on which is superimposed an adaptive hierarchical quad-tree with
$l_{max}$  refinement levels. We let $l_{cut}$ denote the cutoff level.
If $l_{cut}\leq l_{max}$, for each level l that satisfies
the condition $l_{cut}\leq l \leq l_{max}$, 
determine the number of terms
needed in the Hermite expansions $N_h(l)$ and the number of terms needed 
in the local expansion $N_t(l)$ according
to the box size, the parameter $\delta$ that defines the variance of the 
Gaussian, the user-defined precision $\epsilon$, and 
the estimates
\eqref{eq:E_H(p)_sep}, \eqref{eq:E_T(p)}.

We denote the leaf nodes by $B_i, \ i = 1,\dots,M$, where
$M$ is the total number of leaf nodes across all levels.
We assume that the source distribution on each $B_i$ is given by a 
collection of point sources, as well as a smooth function $f$, sampled on 
a $k\times k$ grid. The number of grid points is denoted by
$N = Mk^2$ and the number of discrete sources is denoted by $N_s$.
We assume the output is desired at the $N$ grid points as well as the
$N_s$ source locations. 

\newpage

\def\step#1{\par \vspace{.05in} \begin{center} {\bf #1} \end{center} }

             %%%%%%%%%%%%%%%%%%%%%%%%
              \step{Step I: Upward pass}
             %%%%%%%%%%%%%%%%%%%%%%%%
\begin{tabbing}
.......\=.......\=.......\=.......\=.......\=      \kill

\> {\bf for $l = l_{max},\dots,l_{cut}$ } \\
\> \>  {\bf for} every box $j$ on level $l$ \\
\> \>  \> {\bf if} $j$ is childless {\bf then} \\
\> \> \> \>  $\bullet$ Form Hermite expansion 
             $\Phi_{l,j}$ using (\ref{eq:hcoeff}) \\
\> \> \> {\bf else} \\
\> \> \> \> $\bullet$ Form Hermite expansion 
             $\Phi_{l,j}$ by merging the expansions of \\
\> \> \> \> $\ \ $ its children using ${\cal T}_{HH}$ 
            (see Lemma \ref{lemma:H2H}) \\
\> \> \> {\bf endif} \\
\> \> {\bf end} \\ 
\> {\bf end} \\ 
\end{tabbing}
             %%%%%%%%%%%%%%%%%%%%%%%%
              \step{Step II: Downward pass}
             %%%%%%%%%%%%%%%%%%%%%%%%
\begin{tabbing}
.......\=.......\=.......\=.......\=.......\=      \kill
\> {\bf for} every box $j$ on level $l_{cut}$ \\
\> \>  $\bullet$ Set $\Psi_{l_{cut},j} = 0$ \\
\> {\bf end} \\
\> {\bf for} $l = l_{cut}+1,\dots,l_{max}$  \\
\> \>  {\bf for} every box $j$ on level $l$: \\
\> \> \> $\bullet$ Compute $\tilde \Psi_{l,j}$ from its parent's $\Psi$ 
expansion using the operator \\
\> \> \> $\ \ $ ${\cal T}_{LL}$ \\
\> \> \> {\bf for} every box $m$ in $j$'s interaction list: \\
\> \> \> \>  $\bullet$ Increment $\Psi_{l,j}$ by adding in the 
contributions from all boxes \\ 
\> \> \> \> $\ \ $ in $j$'s interaction list, using (\ref{lotanew}).\\
\> \> \> {\bf if} $j$ is childless {\bf then} \\
\> \> \> \> {\bf for} every box $m$ in $j$'s {\em s-list:} \\
\> \> \> \> \> $\bullet$ Evaluate the Hermite expansion $\Phi(m)$ at each
target \\
\> \> \> \> \> $\ \ $ in box $j$. \\
\> \> \> \>  {\bf end} \\
\> \> \> \> {\bf for} every box $m$ in $j$'s {\em s-list:} \\
\> \> \> \> \> $\bullet$ Increment the local expansion $\Psi(m)$ from the
smooth \\
\> \> \> \> \> $\ \ $ and point source distribution in $j$, using the precomputed 
\\
\> \> \> \> \> $\ \ $ operators \eqref{tdirect} for the smooth source distribution  \\
\> \> \> \> \> $\ \ $and \eqref{locexp2d} for the point sources \\
\> \> \> \>  {\bf end} \\
\> \> \> \> $\bullet$ Evaluate the local expansion $\Psi_{l,j}$ at each 
target in box $j$ \\
\> \> \> \> $\ \ $ (whether the target is a grid point or a point
source location) \\
\> \> \> {\bf endif} \\
\> \> {\bf end} \\ 
\> {\bf end} 
\end{tabbing}

             %%%%%%%%%%%%%%%%%%%%%%%%
              \step{Step III: Local interactions}
             %%%%%%%%%%%%%%%%%%%%%%%%
%\noindent
%{\bf Comment [} After the downward pass, all non-neighboring
%interactions have been accounted for.
%{\bf ]} 

\begin{tabbing}
.......\=.......\=.......\=.......\=      \kill
\> {\bf for} $l = 0,\dots,l_{max}$  \\
\> \>  {\bf for} every {\em leaf node} $B_j$ on level $l$: \\
\> \> \> $\bullet$ At each tensor product grid point in $B_j$, 
compute influence of the \\
\> \> \> $\ \ $ smooth source in colleagues, fine neighbors and coarse \\
\> \> \> $\ \ $ neighbors using precomputed tables of coefficients 
\eqref{eq:localweights} \\
\> \> \> $\bullet$ For each point source location in $B_j$, use Chebyshev 
interpolation \\
\> \> \> $\ \ $ to obtain the Gaussian field due to smooth sources in 
colleagues, \\
\> \> \> $\ \ $ fine and coarse neighbors \\
\> \> \> $\bullet$ For {\em all} targets in $B_j$, use direct computation to 
evaluate  the \\
\> \> \> $\ \ $ Gaussian field due to point sources in colleagues, \\
\> \> \> $\ \ $ fine and coarse neighbors \\
\> \>  {\bf end} \\
\> {\bf end}

\end{tabbing}

\vspace{.2in}
\noindent

The cost of the adaptive FGT is easily estimated.
Creating the tree and sorting sources into leaf nodes requires
at most $O((N + N_s) l_{max})$ work.
Forming expansions on all leaf nodes requires
$O( (N+N_s) p^2$ work, for an expansion of order $p$.
The remainder of the upward pass requires
$O(N_b p^3)$ work to carry out the recursive merging of Hermite
expansions, where $N_b$ is the number of boxes in the quad-tree.
The downward pass requires approximately
$O(27 N_b p^3)$ work to carry out the Hermite-local and local-local
translations.
Finally, the local work is of the order 
$O(N_s q)$ for the point sources (assuming the tree has been refined 
until there are $O(q)$ sources per leaf node). For the continuous source
distribution, 
only approximately $13 N \, \frac{k(k+1)}{2} + N_s k^2$ operations are required.
The first term accounts for the cost of computing the Gaussian potential
on the tensor product grids from the near neighbors, 
using precomputed tables, while the latter term
is the interpolation cost at the 
point source locations.
The factor $13$ is a consequence of the observation that the
maximum number of neighbors a box can have
is thirteen (twelve fine neighbors and itself). 

\begin{remark}
The preceding analysis assumes that the translation operators
${\cal T}_{HH}$, ${\cal T}_{LL}$, and ${\cal T}_{HL}$
have been computed according to the formulae
\eqref{lolo}, \eqref{tata} and \eqref{lota}, taking
advantage of the tensor product nature of the two-dimensional
Hermite and local expansions to achieve $O(p^3)$ complexity, instead 
of the naive estimate $O(p^4)$. In the $d$-dimensional setting,
the operation count is $O(d p^{d+1})$ instead of $O(p^{2d})$ 
\cite{greengard_strain1,greengard98}.
\end{remark}

We have further accelerated the code by making use of 
diagonal translation operators, following the method described in 
\cite{greengard98} and \cite{fgt3}. Instead of Hermite expansions,
it is straightforward to show that 
\begin{equation}
\sum_{\alpha\geq 0}A_{\alpha} h_{\alpha}\left(\frac{\x-\s_B}{\sqrt{\delta}}
\right) =
\int_{\mathbb{R}^2}
w(\k) e^{-\frac{\|\k\|^2}{4}} 
e^{i \k \cdot (\x - \s^B)/\sqrt{\delta}}  \,d\k
\, ,
\label{pwformula}
\end{equation}
where 
\begin{equation}
w(\k) = w(k_1,k_2) = \sum_{\alpha\geq 0} A_{\alpha} (-i)^{|\alpha|} 
k_1^{\alpha_1} k_{2}^{\alpha_2}. 
\end{equation}
This formula is derived from the Fourier relation
\begin{equation}
e^{-\|\x\|^2} = \left(\frac{1}{4\pi}\right)
\int_{\mathbb{R}^2} e^{-\|\k\|^2}{4}e^{i\k \cdot \x}\, d\k. 
\end{equation}
In order to make practical use of 
\eqref{pwformula}, we need to discretize the integral, for which the 
trapezoidal rule is extremely efficient because of the smoothness
and exponential decay of the integrand.
The reason \eqref{pwformula} is useful is that it provides a 
basis in which translation is diagonal. Assuming $p_t$ denotes the 
number of trapezoidal quadrature points required, it is shown in
\cite{greengard98} and \cite{fgt3} that the dominant cost 
of translating Hermite to local expansions, namely the $O(27 N_b p^3)$
term above, can be reduced to $O(3 N_b p_t^2 + N_b p p_t^2)$ work.

The principal difference 
between the methods in  
\cite{greengard98} and \cite{fgt3} and our
hierarchical scheme is that
$p_t$ must be different on each level. Informally speaking, for 
a level where the linear box size is $r_l$, $p_t$ must be 
sufficiently large that the integrand
$e^{i \k \cdot (\x - \s^B)/\sqrt{\delta}}$ is Nyquist-sampled for
$(\x - \s^B) \leq 4 r_l$, where $r_l < r_c \sqrt{\delta}$
and $r_c$ is the cutoff parameter defined above.
(It is easy to verify that
$p_t = O(p)$  \cite{greengard98,fgt3}.) 

\subsection{Boundary FGT} \label{bdryfgt}

We turn now to the evaluation of boundary Gauss transforms
of the form  \eqref{eq:bdryint}, 
for targets both on and off the boundary $\Gamma$. 
We assume that $\Gamma$ itself 
is described as the union of $M_b$ boundary segments:
\[ \Gamma = \cup_{j=1}^{M_b} \Gamma_j, \]
with each boundary segment defined by a $k$th order Legendre
series. That is,
\begin{equation}
 \Gamma_j = \Gamma_j(s) = (x^1_j(s),x^2_j(s)): \quad
 x^1_j(s) = \sum_{n=0}^{k-1}  x^1_j(n) P_n(s), \ 
x^2_j(s) = \sum_{n=0}^{k-1}  x^2_j(n) P_n(s),  
\label{bdrydiscrete}
\end{equation}
with $-1 \leq s \leq 1$.
We assume that the densities $\sigma$ and $\mu$ in \eqref{eq:bdryint}
are also given by corresponding piecewise Legendre series:
\[ \sigma_j(s) = \sum_{n=0}^{k-1}  \sigma_j(n) P_n(s), \quad
\mu_j(s) = \sum_{n=0}^{k-1} \mu_j(n) P_n(s).  \]

For the sake of simplicity, we assume
that $\Gamma$ has been discretized in a manner that is 
commensurate with the underlying data structure used above:
an adaptive, level-restricted tree.
That is, we assume the length of $\Gamma_j$,
denoted by $| \Gamma_j|$ satisfies $|\Gamma_j| \approx r_l$, 
where $r_l$ is the box size of the leaf node in the tree that contains 
the center point $\c_j$ of $\Gamma_j$.

\begin{figure}[htbp]
\centering
\includegraphics[width=.4\textwidth]{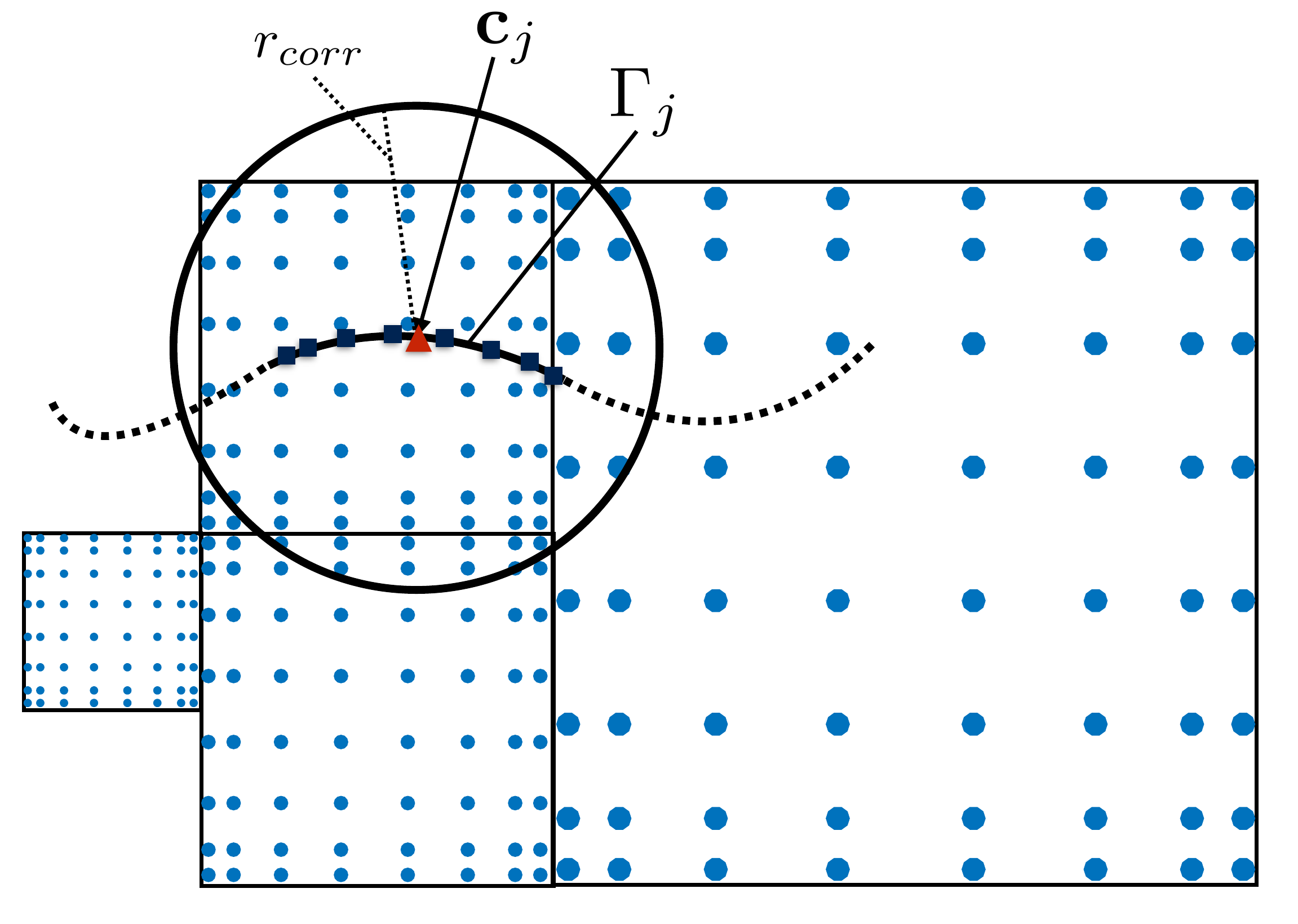} \hspace{.3in}
\includegraphics[width=.25\textwidth]{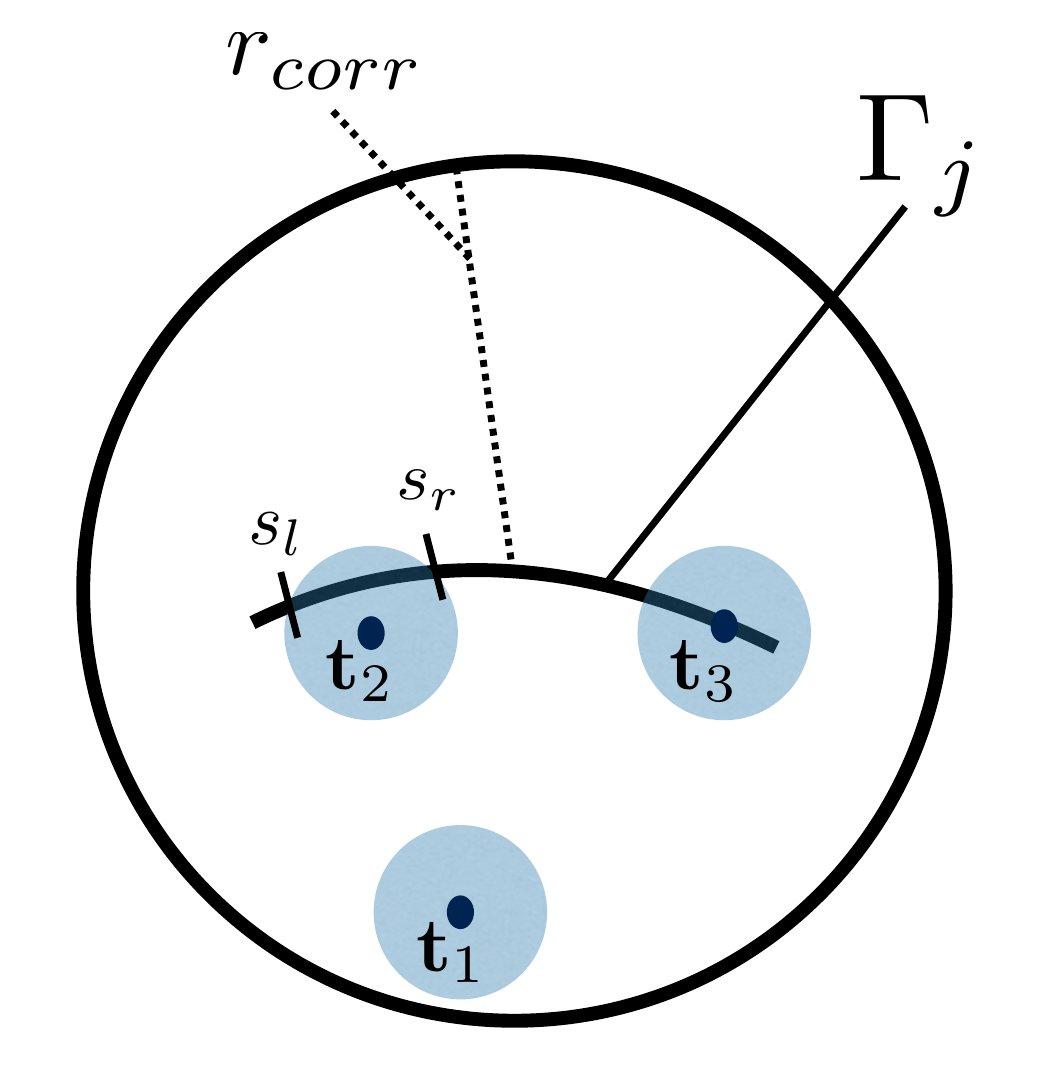}
\caption{(Left) A boundary segment $\Gamma_j$ wth center point $\c_j$ 
lying in a leaf node $B$ of side length $r_l$. 
Depending on the value of $\delta$, 
a boundary integral of the form  \eqref{eq:bdryint}
is either resolved by its discretization using standard Gauss-Legendre
quadrature with $k$ nodes on $\Gamma_j$, or negligible outside the disk
centered at $\c_j$ with radius $r_{corr}$, which we will denote by $D_j$.
(Right) In the latter case (when 
$\delta$ is small and the Gaussian is sharply peaked), 
a simple interpolatory rule can be used to 
compute the correct contribution using $O(k^2)$ work per target point,
either on or off the boundary. The shaded circles in the figure around
the three target points $\t_i$ are meant to indicate the regions where 
the Gaussians centered at $\t_i$ are less than a user-prescribed tolerance
$\epsilon$. 
$\t_1$ is sufficiently far from $\Gamma_j$ that it can be
ignored. $\t_2$ and $\t_3$ are off and on the boundary, respectively.
The relevant portion of $\Gamma_j$ for $\t_2$ is marked in terms of 
the parameter $s$ by $s_l$ and $s_r$.}
\label{bfgtfig}
\end{figure}

Suppose now that we apply composite Gauss-Legendre quadrature to the 
integrals in \eqref{eq:bdryint}. For the ``single layer" type integral,
we have
\begin{equation}
 {\cal S}[\sigma](\x) =
\int_{\Gamma} e^{-\frac{|\x-\y(s)|^2}{\delta}} \sigma(\y(s)) \, ds_{\y} \, 
\approx
\sum_{j=1}^{M_b} \sum_{i=0}^{k-1}
 e^{-\frac{|\x-\y_{ij}|^2}{\delta}} \sigma_{ij} w_{ij}  \, ,
\label{eq:bdrydisc}
\end{equation}
where $\y_{ij} = (y^1_{ij},y^2_{ij})$ is the location of
the $i$th scaled Gauss-Legendre node on
$\Gamma_j$, $\sigma_{ij}$ is the density value at that point,
and $w_{ij} = w_i \sqrt{ [dy^1_j/ds(s_i)]^2 + 
[dy^2_j/ds(s_i)]^2}$.
Here, $s_i$ and $w_i$ denote the standard Gauss-Legendre nodes and weights
on $[-1,1]$. 

Note that the quadrature weight $w_{ij}$ involves 
both the standard weight $w_i$ and the change of variables 
corresponding to an arc-length parametrization on each segment. 
The necessary derivatives can be computed from \eqref{bdrydiscrete}.
Note also that the sum in \eqref{eq:bdrydisc} consists simply of
point sources and is easily incorporated into the FGT above.
The only remaining issue has to do with the accuracy of the formula
\eqref{eq:bdrydisc}, since the smoothness of the integrand depends
strongly on the parameter $\delta$. Here, however, the rapid decay of the
Gaussian makes the problem tractable for any $\delta$.
To see why, consider a boundary segment $\Gamma_j$, centered at $\c_j$ 
in a leaf box $B$ of commensurate size (Fig. \ref{bfgtfig}, left). 
Suppose first that $\delta$ is sufficiently large that
$|\Gamma_j| \leq C(\epsilon) \sqrt{\delta}$. 
$C(\epsilon)$ is straightforward to tabulate in terms of the 
user-specified tolerance $\epsilon$.
Then, it is straightforward to 
show that the error in $k$-point Gauss-Legendre quadrature is of the order 
\[ E_k \approx C' \left( \frac{e |\Gamma_j|^2}{16 \delta k} \right)^k    \]
for some constant $C'$.
This follows from the error estimate for Gauss-Legendre quadrature
\cite{DR},
\[  E \leq \frac{(b-a)^{2k+1} (k!)^4}{(2k+1)[(2k)!]^3} 
\| f^{(2k)}\|_\infty
\]
for the integral $\int_a^b f(x) \, dx$, Stirling's formula,
and Cramer's inequality. In short, the $k$-point quadrature rule is 
{\em spectrally accurate}. 
Suppose now that
$|\Gamma_j| > C(\epsilon) \sqrt{\delta}$, and let $r_{corr} = |\Gamma_j|$.
Then, for any target outside the circle of radius $r_{corr}$ centered
at $\c_j$, the integrand is bounded by 
$e^{-C(\epsilon)^2/4} \, |\Gamma_j| \, \| \sigma \|_\infty$. 
(Setting $C = 12$, the integrand is approximately 
$|\Gamma_j| \, \| \sigma \|_\infty \, (2 \, \cdot 10^{-16})$.) 
Thus, for each boundary segment with 
$|\Gamma_j| > C(\epsilon) \sqrt{\delta}$,
it remains only to correct the result obtained from the FGT 
within this circle $D_j$ (Fig. \ref{bfgtfig}, right). 

This correction can be computed rapidly and accurately as follows.
Let us denote by $D(\t,r)$ the circle of radius $r$ centered at $\t$ 
for any target $\t$ in the circle, where 
$r = \sqrt{\delta \ln(1/\epsilon)}$ so that
$e^{-\|\t-\y\|^2/\delta} < \epsilon$. If $D(\t,r)$ doesn't intersect
$\Gamma_j$, the field is negligible and no correction is needed.
Otherwise, we compute the intersection of 
$\Gamma_j$ and $D(\t,r)$ and let the endpoints of the intersection be denoted 
by $s_l$ and $s_r$ (in terms of the underlying parametrization of $\Gamma_j$).
We then interpolate the source distribution $\sigma(s)$
to $k_c$ scaled Gauss-Legendre nodes on $\Gamma_j^{s}$ for 
$s \in [s_l,s_r]$, and
replace the original $k$-point quadrature on $\Gamma_j$ 
by a $k_c$-point Gauss-Legendre rule on $[s_l,s_r]$.
Setting $k_c$ to 20 yields approximately fourteen digits of accuracy
assuming the density $\sigma(s)$ is locally smooth.

\subsection{Periodic boundary conditions}

It is straightforward to extend the FGT to handle periodic conditions
on the unit square $D = [-0.5,0.5]^2$. Conceptually speaking,
this can be accomplished by tiling the entire plane
${\bf R}^2$ with copies of the source distribution $f$.
For this, we let $\Lambda = \{ \j = (j_1,j_2) | j_1,j_2 \in {\bf Z} \}$. 
The tile ${\rm T}_{\j}$ 
is a unit square centered at the lattice point 
$\j  \in \Lambda$. The extended periodic source distribution 
will be denoted by $\tilde{f}$. From this, the solution to the periodic
problem can be written as:
\[ 
\tilde{F}(\x)=\int_{{\bf R}^2} e^{-\frac{|\x-\y|^2}{\delta}}\tilde{f}(\y) \, d\y.\;\;\;\x\in D.
\]

As in the FMM for the Poisson equation \cite{MLFMM,Ethridge2001},
we can accomodate periodic boundary conditions with very little 
change to the data structure or processing.
To see this, note that, if we carry out the upward pass of the FGT until the 
root node (level $l=0$), we obtain an Hermite expansion describing the
field due to all sources in $D$. Because of the translation invariance
of the kernel, the coefficients of this expansion are
the same for every tile ${\rm T}_{\j}$ covering the plane, expanded
about the corresponding lattice point $\j$. 
We denote the expansion about $\j$ by
\[ 
\phi(\x)=\sum_{\alpha\leq p} A_{\alpha}
h_{\alpha}\left(\frac{\x - \j}{\sqrt{\delta}}\right).
\]
Let us now define the punctured lattice by 
$$\Lambda' = \Lambda - \{
(-1,-1), (-1,0), (-1,1), (0,-1), (0,1), (1,-1), (1,0), (1,1) \}.$$
Having deleted the colleagues of the original unit box $D$, centered at the
origin, the remaining tiles indexed by $\Lambda'$ are all well separated
from $D$. 
From Lemma \ref{lemma:H2L} and the linearity of the problem, it is 
clear that the 
contribution to field in $D$ from {\em all} well-separated tiles
indexed by $\Lambda'$ can be representation by a local expansion 
\begin{equation}
\tilde{F}_{far}(\x)=
\sum_{\beta\geq 0} C_{\beta}\left(\frac{\x}{\sqrt{\delta}}\right)^{\beta},
\label{psi01}
\end{equation}
centered at the origin, with coefficients
\begin{equation}
C_{\beta}=\frac{(-1)^{|\beta|}}{\beta!}\sum_{\alpha\leq p} A_{\alpha}
{\cal L}_{\alpha+\beta}
   \label{eq:imagall}
\end{equation}
where
\begin{equation}
{\cal L}_{\alpha+\beta} =
\sum_{\j\in\Lambda'}h_{\alpha+\beta}\left(\frac{\j}{\sqrt{\delta}}\right).
   \label{latsums}
\end{equation}

Extending Definitions \ref{ldef}-\ref{def:exps}, we
let $\Psi_{0,1}$ denote the local expansion for the root node $D$ at level $0$,
and define the nine tiles centered at $\Lambda - \Lambda'$ to be the 
root node's colleagues.

When $\delta$ is so small that $l_{cut}\geq 0$, the far field 
$\tilde{F}_{far}(\x)$ in the root node $D$
is negligible (for given accuracy $\epsilon$), so that we can initialize
its coefficients $C_{\beta} = 0$. Otherwise, we carry out the 
computation in \eqref{eq:imagall} to initialize $\Psi_{1,0}$.
This requires the computation of tte lattice sums in 
\eqref{latsums}.
These are obtained rapidly from the Poisson summation formula:
\begin{equation} \label{eq:poissum}
\begin{split}
\sum_{j_1=-\infty}^{\infty} 
\sum_{j_2=-\infty}^{\infty} 
&h_{\alpha}\left(\frac{(j_1,j_2)}{\sqrt{\delta}}\right)= \\
&\pi^2\delta^2 (-2\pi i \sqrt{\delta})^{\alpha_1+\alpha_2}
\left( 
\sum_{m=-\infty}^{\infty} m^{\alpha_1}\cdot e^{-\pi^2m^2\delta} \right)
\left(
\sum_{n=-\infty}^{\infty} n^{\alpha_2}\cdot e^{-\pi^2n^2\delta} \right).
\end{split}
\end{equation}

It is straightforward to verify that, when $\delta$ is large enough that
$\tilde{F}_{far}(\x)$ is non-negligible, only a few terms are required on
the right-hand side of \eqref{eq:poissum} and only milliseconds are needed
for all ${\cal L}_{\alpha+\beta}$.
 
Only two other changes are needed in the FGT:
the interaction list and near neighbor computations must be adjusted 
to account for periodic images. Having defined the colleagues of 
the root node above, this is handled automatically by the data structure.
For large scale problems with many levels of refinement, this involves
a modest increase in work for boxes near the boudary of $D$ and a negligible
increase in the total work.

\section{Numerical results} \label{sec_results}
%\subsection{Performance of volume and boundary FGT}
%%%%%% 
%%%%%% numerical experiment 1
%%%%%%
In this section, we illustrate the performance of both the
volume and boundary FGT, implemented in Fortran, with 
experiments carried out on a single core of a 3.4GHz Intel Xeon processor.
Our first example demonstrates the linear
scaling of CPU time with the number of grid points. We compute
the volume FGT
\begin{equation}
G_{\delta}[f_k](\x)=\int_{{\bf R}^2} e^{-\frac{|\x-\y|^2}{\delta}} f(\y) d\y \, ,
\end{equation}
with the source distribution
\begin{equation} \label{eq:exper2}
f_k(\x)=\sin(2k\pi x_1) \cos(2k\pi x_2) \;\;\; (k\in{\bf Z}),
\end{equation}
imposing periodic boundary conditions.

In order for the numerical experiment to be non-trivial, we increase
the complexity of the problem as we increase the number of degrees of freedom.
More precisely, we consider four cases, 
with $k=1, 2, 4, 8$ and $\delta=\frac{1}{k^2}$, 
requiring a finer and finer spatial mesh to resolve the data.
For each choice of $k$, we create a (level-restricted) quadtree, refined to 
a level where $f_k$ is accurately represented with our piecewise
eighth order polynomial to 10 digits of accuracy.
For the function described in \eqref{eq:exper2}, 
the refinement happens to be uniform, with
$N=256, 1024, 4096, 16384$ leaf nodes for the four cases, respectively. 
(We will see examples with inhomgeneous source distributions and 
adaptive data structures below.)
Timings are given in
Table \ref{fgtvoltable3} and plotted in Fig. \ref{fgtvolfig3}.

\begin{table}[htbp]
\begin{center}
 \begin{tabular}{|c|c|c|c|c|}
 \hline
   & \multicolumn{4}{|c|}{$k$} \\ \hline
  $\epsilon$ & $1$ & $2$ & $4$ & $8$ \\ \hline
  $10^{-3}$ & $3.0 \cdot 10^5$ & $3.3 \cdot 10^5$ & $6.1 \cdot 10^5$ & $7.3 \cdot 10^5$ \\ \hline
  $10^{-6}$ & $1.7 \cdot 10^5$ & $1.8 \cdot 10^5$ & $3.1 \cdot 10^5$ & $4.3 \cdot 10^5$ \\ \hline
  $10^{-9}$ & $0.9 \cdot 10^5$ & $0.9 \cdot 10^5$ & $1.1 \cdot 10^5$ & $1.3 \cdot 10^5$ \\ \hline
 \end{tabular}
 \end{center}
 \caption{Throughput on a single core for the volume FGT with periodic boundary condition in units of points/second.}
\label{fgtvoltable3}
\end{table}

\begin{figure}[htbp]
\centering
\includegraphics[width=.55\textwidth]{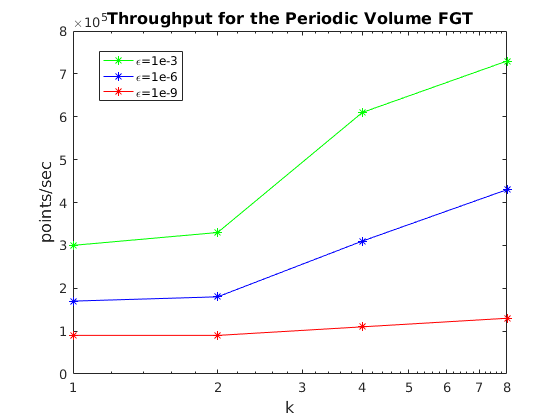}
\caption{Throughput for the volume FGT 
(measured in units of $100,000$ points/second) with various precisions,
plotted as a function of $\delta$. (The data is the same as in Table 
\ref{fgtvoltable3}.)}
\label{fgtvolfig3}
\end{figure}

While the cost appears to grow in a sublinear fashion with the number of 
grid points, this is simply because of the non-trivial cost of the precomputation.
A more precise model for the CPU time 
takes the form 
\begin{equation}
T(N,\epsilon)=A(\epsilon)N+B(\epsilon) \log N 
\end{equation}
for precision $\epsilon$.
The term $B(\epsilon) \log N$ is dominated by the building of tables 
for the local interactions, which is done once per level.
In the present example, the sublinear
part contributes about 30 \% of the cost for the smaller problem sizes and less than
10 \% for the largest $N$.
If we subtract the time for precomputation/table building,
and measure the time of the remainder of the FGT, 
we see a steady ``throughput," measured in {\em points per second} 
for each fixed precision. This verifies the linear scaling
(Table \ref{fgtvoltable3-2} and Fig. \ref{fgtvolfig3-2}).

\begin{table}[htbp]
\begin{center}
 \begin{tabular}{|c|c|c|c|c|}
 \hline
   & \multicolumn{4}{|c|}{$k$} \\ \hline
  $\epsilon$ & $1$ & $2$ & $4$ & $8$ \\ \hline
  $10^{-3}$ & $7.0 \cdot 10^5$ & $7.2 \cdot 10^5$ & $8.2 \cdot 10^5$ & $8.1 \cdot 10^5$ \\ \hline
  $10^{-6}$ & $3.8 \cdot 10^5$ & $4.0 \cdot 10^5$ & $4.3 \cdot 10^5$ & $4.1 \cdot 10^5$ \\ \hline
  $10^{-9}$ & $1.5 \cdot 10^5$ & $1.5 \cdot 10^5$ & $1.4 \cdot 10^5$ & $1.5 \cdot 10^5$ \\ \hline
 \end{tabular}
 \end{center}
 \caption{Throughput on a single core for the volume FGT with periodic boundary condition in units of points/second.}
\label{fgtvoltable3-2}
\end{table}

\begin{figure}[htbp]
\centering
\includegraphics[width=.55\textwidth]{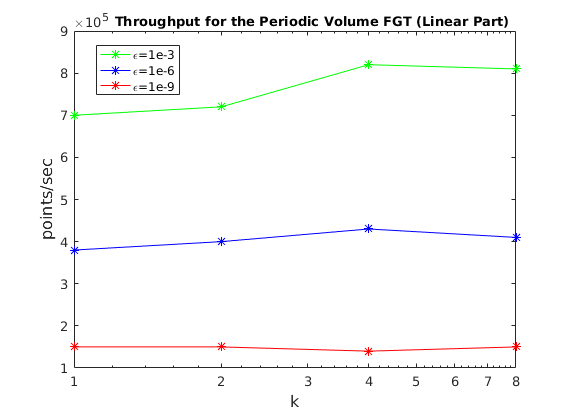}
\caption{Throughput for the volume FGT 
(measured in units of $100,000$ points/second) with various precisions,
plotted as a function of $\delta$. (The data is the same as in Table 
\ref{fgtvoltable3-2}.)}
\label{fgtvolfig3-2}
\end{figure}

%%%%%% 
%%%%%% numerical experiment 2
For our second example, we again compute the volume FGT with periodic 
boundary condition
where $f_k$ is given by \eqref{eq:exper2} with $k = 2$.
%%%%%%
$G_{\delta}[f_k]$ is available analytically for this $f_k(\x)$ from Fourier 
analysis.
%-------------
The source distribution is again resolved to 10 digits of accuracy, but we now
compute the FGT with requested precisions of
$\epsilon=10^{-3}$, $10^{-6}$ and $10^{-9}$. For each choice of $\epsilon$, 
we carry out the computation for a wide range of $\delta$,
from $\delta=10^{-7}$ to $\delta=10^{-1}$.
Timings are provided in Table \ref{fgtvoltable2} and plotted in
Fig. \ref{fgtvolfig2}. 

%-----------------------
\begin{table}[htbp]
\begin{center}
 \begin{tabular}{|c|c|c|c|c|c|c|c|}
 \hline
   & \multicolumn{7}{|c|}{$\delta$} \\ \hline
  $\epsilon$ & $10^{-1}$ & $10^{-2}$ & $10^{-3}$ & $10^{-4}$ & $10^{-5}$ & $10^{-6}$ & $10^{-7}$\\ \hline
  $10^{-3}$ & $3.4 \cdot 10^5$ & $3.5 \cdot 10^5$ & $3.3 \cdot 10^5$ & $5.5 \cdot 10^5$ & $6.2 \cdot 10^5$ & $6.9 \cdot 10^5$ & $7.3 \cdot 10^5$\\ \hline
  $10^{-6}$ & $1.7 \cdot 10^5$ & $1.7 \cdot 10^5$ & $1.7 \cdot 10^5$ & $0.7 \cdot 10^5$ & $3.0 \cdot 10^5$ & $5.0 \cdot 10^5$ & $5.4 \cdot 10^5$\\ \hline
  $10^{-9}$ & $0.8 \cdot 10^5$ & $0.9 \cdot 10^5$ & $0.6 \cdot 10^5$ & $0.3 \cdot 10^5$ & $3.5 \cdot 10^5$ & $4.0 \cdot 10^5$ & $4.4 \cdot 10^5$\\ \hline
 \end{tabular}
 \end{center}
 \caption{Throughput on a single core for the volume FGT with periodic boundary condition in units of points/second.}
\label{fgtvoltable2}
\end{table}

\begin{figure}[htbp]
\centering
\includegraphics[width=.55\textwidth]{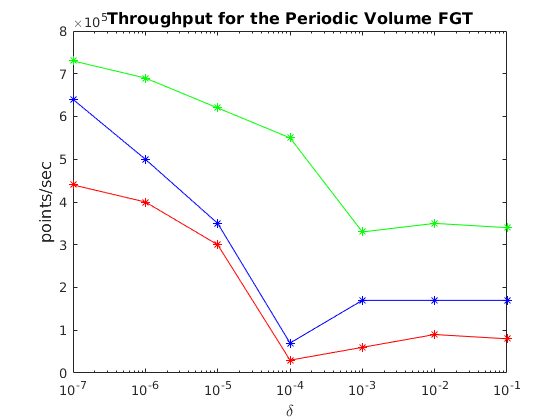}
\caption{Throughput for the volume FGT 
(measured in units of $100,000$ points/second) with various precisions,
plotted as a function of $\delta$. (The data is the same as in Table 
\ref{fgtvoltable2}.)}
\label{fgtvolfig2}
\end{figure}

%-----------------------
%%%%%%
%%%%%% numerical experiment 3
%%%%%%
Our third example illustrates the performance of the FGT with an adaptive data
structure. For this, we let $B$ denote the unit box with $f$ given by
\begin{equation}
f(\x)=\sum_{i=1}^5 e^{-\alpha_i|\x-\x_i|^2} \, ,
\end{equation}
with
\[ [\x_1,\dots,\x_5] = [(0.20,0.10), (0.31,0.50),
 (0.68,0.40), (0.41,0.80), (0.12,0.45)] \]
and
\[ [\alpha_1,\dots,\alpha_5]  = (0.010,0.005,0.003,0.002,0.001). \]

\begin{figure}[htbp]
\centering
\includegraphics[width=.45\textwidth]{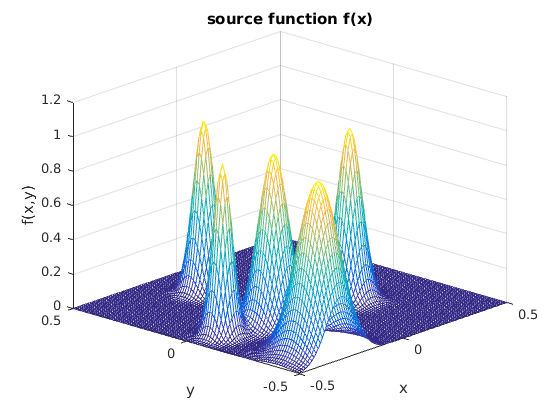} \hspace{.3in}
\includegraphics[width=.45\textwidth]{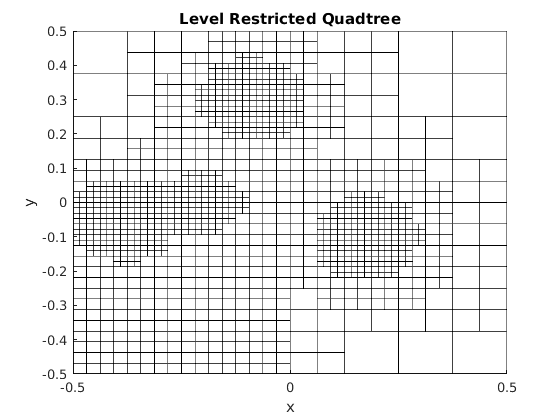}
\caption{(Left) Plot of the volume source distribution, which is taken to be the sum of a few Gaussians. (Right) A level restricted quadtree on which the source distribution is resolved to 10 digits of accuracy as a piecewise
polynomial of degree 8.}
\label{fgtvolsource}
\end{figure}

$G_{\delta}[f]$ is available analytically for this choice of $f(\x)$. 
In Fig. \ref{fgtvolsource}, we plot the source distribution along
with a level restricted quadtree on which the source distribution is resolved
to ten digits of accuracy.
We compute the volume FGT with requested precisions of
$\epsilon=10^{-3}$, $10^{-6}$ and $10^{-9}$. For each choice of $\epsilon$, 
we carry out the computation for a wide range of $\delta$,
from $\delta=10^{-7}$ to $\delta=10^{-1}$. 
Timings are given in Table \ref{fgtvoltable} and plotted in
Fig. \ref{fgtvolfig}.

\begin{table}[htbp]
\begin{center}
 \begin{tabular}{|c|c|c|c|c|c|c|c|}
 \hline
   & \multicolumn{7}{|c|}{$\delta$} \\ \hline
  $\epsilon$ & $10^{-1}$ & $10^{-2}$ & $10^{-3}$ & $10^{-4}$ & $10^{-5}$ & $10^{-6}$ & $10^{-7}$\\ \hline
  $10^{-3}$ & $2.8 \cdot 10^5$ & $2.8 \cdot 10^5$ & $3.4 \cdot 10^5$ & $4.3 \cdot 10^5$ & $5.0 \cdot 10^5$ & $7.5 \cdot 10^5$ & $8.0 \cdot 10^5$\\ \hline
  $10^{-6}$ & $1.6 \cdot 10^5$ & $1.6 \cdot 10^5$ & $1.5 \cdot 10^5$ & $0.9 \cdot 10^5$ & $3.1 \cdot 10^5$ & $5.4 \cdot 10^5$ & $6.0 \cdot 10^5$\\ \hline
  $10^{-9}$ & $0.8 \cdot 10^5$ & $0.8 \cdot 10^5$ & $0.5 \cdot 10^5$ & $0.4 \cdot 10^5$ & $3.6 \cdot 10^5$ & $4.3 \cdot 10^5$ & $4.8 \cdot 10^5$\\ \hline
 \end{tabular}
 \end{center}
 \caption{Throughput on a single core for the volume FGT in units of
points/second.  This is a useful benchmark for linear scaling algorithms,
permitting simple estimation of the performance in terms of CPU time to 
any problem size.
}
\label{fgtvoltable}
\end{table}

\begin{figure}[htbp]
\centering
\includegraphics[width=.55\textwidth]{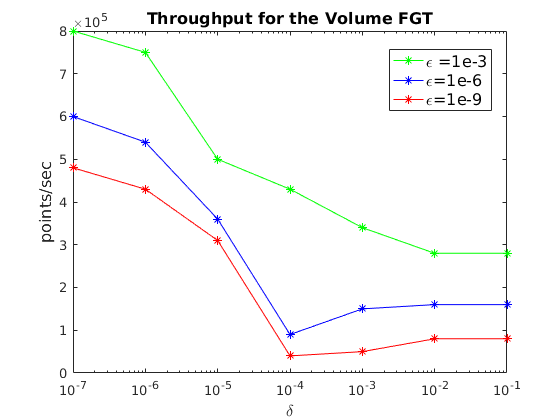}
\caption{Throughput for the volume FGT 
(measured in units of $100,000$ points/second) with various precisions,
plotted as a function of $\delta$. (The data is the same as in Table 
\ref{fgtvoltable}.)}
\label{fgtvolfig}
\end{figure}
%%%%%%

Note that, for a fixed tolerance $\epsilon$, the performance of the
volume FGT is relatively insensitive to the parameter
$\delta$. For large $\delta$, the far field is nontrivial but very smooth.
For sufficiently small $\delta$,
the interaction is entirely local and the FGT is particularly fast.
The worst case performance is for 
$\delta \approx 10^{-4}$, where both the far field and near field
need significant effort. 

%-----------------------
%%%%%%
%%%%%% numerical experiment 4
%%%%%%
Our fourth example illustrates the performance of the boundary FGT. 
We compute the integral
\begin{equation}
G_{\delta}[f](\x)=\int_{\Gamma} e^{-\frac{|\x-\y|^2}{\delta}} f(\y) d\y \, ,
\end{equation}
where $\Gamma$ is chosen to be the ellipse:
\begin{equation}
\begin{cases}
y_1(\theta) = 0.45 \cos(\theta), \\
y_2(\theta) =0.25 \sin(\theta), 
\end{cases}
(0 \leq \theta \leq 2\pi) \, .
\end{equation}
We let 
\begin{equation}
f(\x) = \cos(2x_1)+\sin(x_2),\;\;\;(\x\in\Gamma).
\end{equation}

%We use the restriction of $f$ to $\Gamma$ as the boundary density.
%To create a nonuniform set of targets, we resolve the
%function $f(\x)$ in the plane to 10 digits using 
%a level restricted quadtree. The corresponding sample points are 
%used to define a volumetric target grid (Fig. \ref{fgtbdrysource}). 
%CPU times are given in Tables \ref{fgtbdrytable},\ref{fgtbdrybdrytable} 
%and Figures \ref{fgtbdryfig},\ref{fgtbdrybdryfig},
We create an adaptive quadtree on the unit box so that each leaf box of the
tree contains no more than $O(1)$ boundary points and then enforce the 
level-restricted condition, yielding the data structure shown in 
Fig. \ref{bdryfig}. The leaf nodes with $8 \time 8$ tensor product Chebyshev
grids on each define our volumetric targets. 
The boundary FGT is then evaluated at all volumetric grid points and all boundary
points as well. 
Timings are given in Table \ref{fgtbdrytable} and plotted in
Fig. \ref{fgtbdryfig}.

\begin{figure}[htbp]
\centering
\includegraphics[width=.55\textwidth]{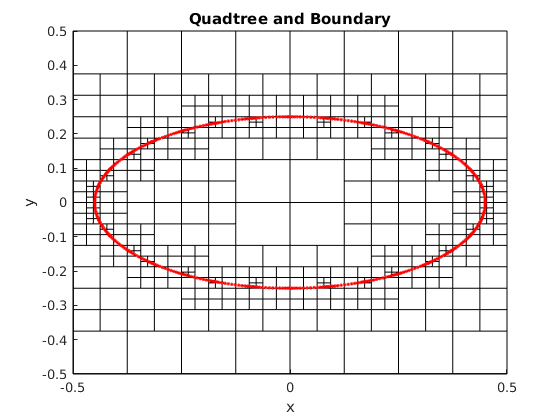}
\caption{A level-restricted quad tree determined by the 
discretization of the indicated ellipse. 
} 
\label{bdryfig}
\end{figure}

\begin{table}[htbp]
\begin{center}
 \begin{tabular}{|c|c|c|c|c|c|c|c|}
 \hline
   & \multicolumn{7}{|c|}{$\delta$} \\ \hline
  $\epsilon$ & $10^0$& $10^{-1}$ & $10^{-2}$ & $10^{-3}$ & $10^{-4}$ & $10^{-5}$ & $10^{-6}$\\ \hline
  $10^{-3}$ & $4.2\cdot 10^5$ & $3.9\cdot 10^5$ & $4.1\cdot 10^5$ & $3.6\cdot 10^5$ & $5.0\cdot 10^5$ & $2.8\cdot 10^5$ & $5.4\cdot 10^5$ \\ \hline
  $10^{-6}$ & $2.0\cdot 10^5$ & $1.8\cdot 10^5$ & $1.6\cdot 10^5$ & $1.6\cdot 10^5$ & $0.9\cdot 10^5$ & $1.8\cdot 10^5$ & $3.9\cdot 10^5$ \\ \hline
  $10^{-9}$ & $1.0\cdot 10^5$ & $0.9\cdot 10^5$ & $0.8\cdot 10^5$ & $0.4\cdot 10^5$ & $0.2\cdot 10^5$ & $1.1\cdot 10^5$ & $2.7\cdot 10^5$ \\ \hline
 \end{tabular}
 \end{center}
\caption{Throughput for the boundary FGT with volumetric targets,
measured in points/second, for various precisions
and values of $\delta$.} 
 \label{fgtbdrytable}
\end{table}

\begin{figure}[htbp]
\centering
\includegraphics[width=.55\textwidth]{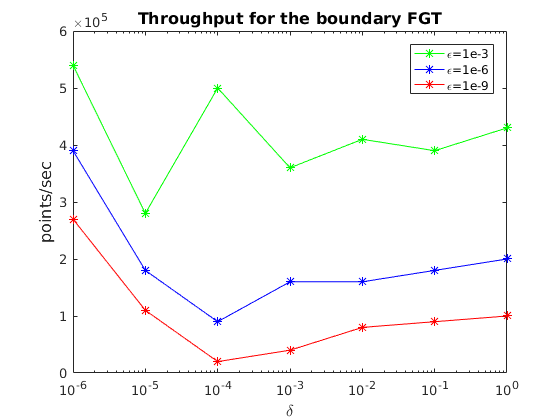}
\caption{Throughput for the boundary FGT with boundary and volume targets,
measured in points/second, for various precisions,
plotted as a function of $\delta$.} 
\label{fgtbdryfig}
\end{figure}

Note, again, that the performance of the boundary FGT varies only modestly over
a wide range of the parameter $\delta$. For sufficiently small $\delta$,
the interaction is entirely local and
no expansions are formed. For sufficiently large $\delta$, a smooth quadrature 
rule is accurate enough to discretize
the boundary integral, avoiding the need for local correction. 
The code is slowest for intermediate values of $\delta$, where both local
and far field contributions are significant (while still satisfying linear scaling with 
the number of source and target points).

\subsection{An initial value problem for the heat equation}

As a final example, we consider the homogeneous heat equation:
\begin{equation} \label{heateqper}
\begin{split}
u_{t}(\x,t) &= \Delta u(\x,t), \\
u(\x,0) &= f(\x)
\end{split}
\end{equation}
for $\x\in D = [-0.5,0.5]^2$, with 
periodic boundary conditions. The initial data is chosen to be
a piecewise constant function:
\begin{equation}
f(\x)=C_i,\;\;\; {\rm for} \;\;\x\in D_i,
\end{equation}
where the unit box $D$ is refined uniformly on a tree that is five levels
deep, resulting in a 32-by-32 grid of leaf nodes $D_i$.
On each leaf node, we let $C_i$ take on a random value in the range
$[0,1]$. $f(x)$ is plotted in Fig. \ref{figheatper}.

The exact solution of this problem is given by:
\begin{equation} \label{heatpersol}
u(\x,t)=\frac{1}{4\pi t}\int_{\bR^2} e^{-\frac{|\x-\y|^2}{4t}} \tilde{f}(\y) d\y,
\end{equation}
where $\tilde{f}$ is the periodic extension of $f$.
This is precisely what is computed by the periodic version of the volume 
FGT and the solution $u(\x,t)$ is plotted for various choices
of $t$ in Fig. \ref{figheatper}, with nine digits of precision in the FGT.

\begin{remark}
There is a subtle issue regarding the use of the FGT to compute
\eqref{heatpersol}, namely that the error estimates for the FGT 
derived above are based
on the Gaussian rather than the heat kernel, which includes the 
additional $1/(4 \pi t)$ scaling in two dimensions.
To compute an accurate convolution requires 
that the local tables be built using the full heat kernel (whose support to 
a fixed precision $\epsilon$ is slightly greater than the support of 
the Gaussian alone). The far field and local expansions also require 
a few more terms. Without entering 
into a detailed analysis, we illustrate the difference
when $t = 10^{-4}$ for leaf node boxes in the present example.
For the FGT, Hermite expansions of order $p = 22$ are needed to 
achieve 9 digits of precision. For the full heat kernel,
it turns out that $p = 28$ is required to achieve the same accuracy.
\end{remark}

\begin{figure}[htbp] 
\centering
\includegraphics[width=.45\textwidth]{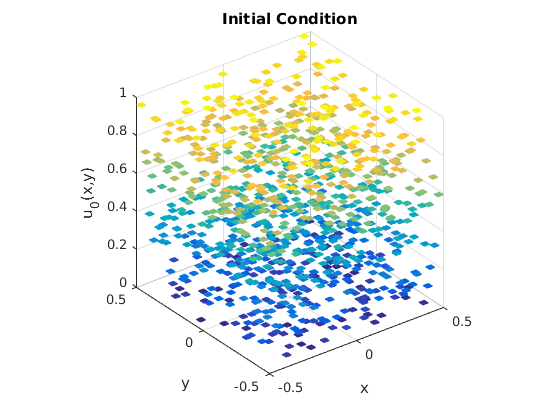} \hspace{.15in}
\includegraphics[width=.45\textwidth]{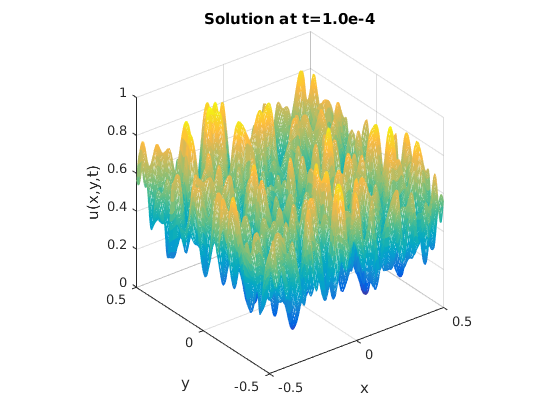}
\includegraphics[width=.45\textwidth]{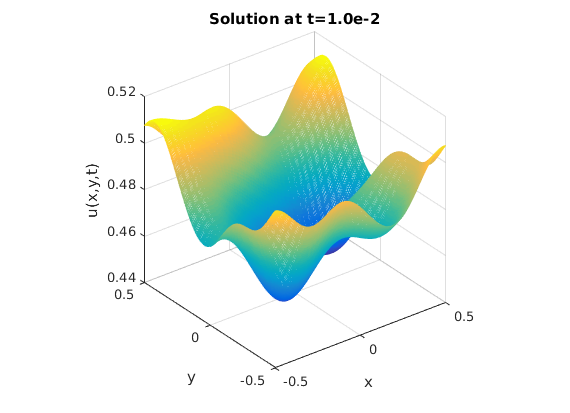} \hspace{.15in}
\includegraphics[width=.45\textwidth]{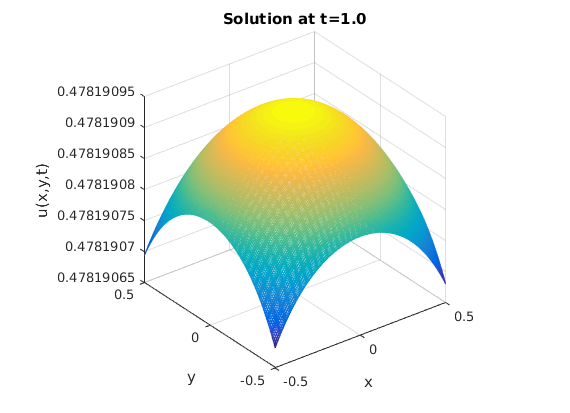}
\caption{The solution to the heat equation on a box with periodic 
boundary conditions and piecewise constant initial data, 
plotted at various times}
\label{figheatper}
\end{figure}

\section{Conclusions} \label{sec_conclusions}

We have presented a new adaptive version of the fast Gauss transform (FGT)
which can be used for the evaluation of volume or boundary integrals with 
a Gaussian kernel as well as the field induced by discrete point sources.
This is a standard and well-defined computational task in its own right, 
and serves as a key component in integral equation-based solvers for the
heat equation in complex geometry \cite{fullheatsolver}. 
The extension of the present method to 
three dimensions is straightforward and will be reported at a later date.

%\pagebreak
%%%%%%%%%%%%%%%%%%%%% The Bibliography %%%%%%%%%%%%%%
\bibliographystyle{siam}
\bibliography{ref.bib}
\end{document}